\definecolor{tugreen}{HTML}{84b819}
\definecolor{pgrey}{rgb}{242, 242, 242}
\definecolor{ashgrey}{rgb}{0.7, 0.75, 0.71}
 	\definecolor{gainsboro}{rgb}{0.86, 0.86, 0.86}
 	\definecolor{lightyellow}{rgb}{1.0, 1.0, 0.88}
 	\definecolor{pastelgray}{rgb}{0.81, 0.81, 0.77}
 	\definecolor{timberwolf}{rgb}{0.86, 0.84, 0.82}
 	\definecolor{beige}{rgb}{0.96, 0.96, 0.86}
\titleformat{\chapter}{\fontfamily{qzc}\selectfont \Huge \color{tugreen}}{\thechapter}{1ex}{}
\titlespacing{\chapter}{0pt}{-2ex}{*1.5}
\newcommand{\R}{{\mathbb{R}}}         
\newcommand{\E}{{\mathbb{E}}}
 \newcommand{\N}{{\mathbb{N}}}         
\newcommand{\bay}{\begin{array}}
\newcommand{\eay}{\end{array}}
\newcommand{\bqa}{\begin{eqnarray*}}
\newcommand{\eqa}{\end{eqnarray*}}
\newcommand{\bee}{\begin{eqnarray*}}
\newcommand{\eee}{\end{eqnarray*}}
\newcommand{\bea}{\begin{eqnarray*}}
\newcommand{\eea}{\end{eqnarray*}}
\newcommand{\bqan}{\begin{eqnarray}}
\newcommand{\eqan}{\end{eqnarray}}
\newcommand{\be}{\begin{eqnarray}}
\newcommand{\ee}{\end{eqnarray}}
\newcommand{\bit}{\begin{itemize}}
\newcommand{\eit}{\end{itemize}}
\newcommand{\ben}{\begin{enumerate}}
\newcommand{\een}{\end{enumerate}}
\newcommand{\beq}{\begin{equation}}
\newcommand{\eeq}{\end{equation}}
\newcommand{\bdes}{\begin{description}}
\newcommand{\edes}{\end{description}}
\newcommand{\btb}{\begin{tabular}}
\newcommand{\etb}{\end{tabular}}
\newcommand{\bcen}{\begin{center}}
\newcommand{\ecen}{\end{center}}
\newcommand{\bmp}{\begin{minipage}}
\newcommand{\emp}{\end{minipage}}
\newcommand{\Cov}{\operatorname{{\it Cov}}}
\newcommand{\Var}{\operatorname{{\it Var}}}
\newcommand{\tr}{\operatorname{tr}}
\newcommand{\vH}{\boldsymbol{H}}
\newcommand{\vI}{\boldsymbol{I}}
\newcommand{\vJ}{\boldsymbol{J}}
\newcommand{\vP}{\boldsymbol{P}}
\newcommand{\vT}{\boldsymbol{T}}
\newcommand{\vV}{\boldsymbol{V}}
\newcommand{\vX}{\boldsymbol{X}}
\newcommand{\vZ}{\boldsymbol{Z}}
\newcommand{\vmu}{\boldsymbol{\mu}}
\newcommand{\vpi}{\boldsymbol{\pi}}
\newcommand{\vsigma}{\boldsymbol{\sigma}}
\newcommand{\vSigma}{\boldsymbol{\Sigma}}
\newcommand{\vell}{\boldsymbol{\ell}}
\newcommand{\veins}{{\bf 1}}
\newcommand{\vnull}{{\bf 0}}
\newcommand{\vUpsilon}{\boldsymbol{\Upsilon}}
\newcommand{\ind}{1\hspace{-0.7ex}1}
\newcommand{\F}{\mathcal {F}}
\newcommand{\Cdot}{\cdot}
\newtheoremstyle{Test1}
  {2 \baselineskip}
  {1.5 \baselineskip}
  {\itshape}
  {-0.0ex}
  {\fontfamily{ppl}\fontseries{l}\fontshape{n}}
  {:}
  {\newline}
   {}
\theoremstyle{Test1}
\newtheorem{Sa}{Theorem}[section]
\newtheorem{theorem}{Theorem}[section]
\newtheorem{re}[Sa]{Remark}
\newtheorem{Le}[Sa]{Lemma}
\newcommand{\Lan}{\mathcal{O}}
\newcommand{\lan}{ \scriptstyle \mathcal{O}\textstyle}
\newcolumntype{x}[1]{!{\centering\arraybackslash\vrule width #1}}
\renewenvironment{proof}[1][\proofname]{\par
  \pushQED{\qed}%
  \fontfamily{ppl}\fontseries{m}\fontshape{it} \topsep6\p@\@plus6\p@\relax
  \trivlist
  \item[\hskip\labelsep
        \bfseries
    #1\@addpunct{:}]\ignorespaces
}{%
  \popQED\endtrivlist\@endpefalse
}
\begin{document}
\title{\Large \bf Inference for high-dimensional split-plot designs with different dimensions between groups
}
\author{Paavo Sattler$^{1}$ and Markus Pauly$^{1}$ 
\\[0.4ex] }

\vspace{-8ex}
  \date{}
\maketitle \vspace*{0.25ex}
\begin{center}\noindent${}^{1}$ {TU Dortmund University, Faculty of Statistics, Germany\\\mbox{ }\hspace{1 ex}email: paavo.sattler@tu-dortmund.de  }\\
 
 \end{center}
\begin{abstract}
\noindent
In repeated Measure Designs with multiple groups, the primary purpose is to compare different groups in various aspects. For several reasons, the number of measurements and therefore the dimension of the observation vectors can depend on the group, making the usage of existing approaches impossible. We developed an approach which can be used not only for a possibly increasing number of groups $a$, but also for group-depending dimension $d_i$, which is allowed to go to infinity. This is a unique high-dimensional asymptotic framework impressing through its variety and do without usual conditions on the relation between sample size and dimension. It especially includes settings with fixed dimensions in some groups and increasing dimensions in other ones, which can be seen as semi-high-dimensional.\\
To find a appropriate statistic test new and innovative estimators are developed, which can be used under these diverse settings on $a,d_i$ and $n_i$  without any adjustments. We investigated the asymptotic distribution of a quadratic-form-based test statistic and developed an asymptotic correct test. Finally, an extensive simulation study is conducted to investigate the role of the single group's dimension.
\end{abstract}

\noindent{\textbf{Keywords:}} High-dimensional, Quadratic Forms, split-plot-designs different dimensions.

\vfill
\vfill

\section{MOTIVATION AND INTRODUCTION}\label{int}
In repeated measure designs, measurements were made repeatedly on the same object or subject, leading to dependence between these values. If the observations, represented through $d$ dimensional vectors, are compared for different groups, we talk about a so-called split-plot design. It presupposes that the number of repetitions in each group is the same, which often does not correspond with reality. For example, for questionnaires with group-specific questions (like pregnancy for women),  the number of questions or answers depends understandably on the group. Another example are time series, where for organizational reasons, in some groups, the number of repetitions is smaller than in others, or one group has fewer treatments. 
Moreover, the influence of the number of repetitions on the result is frequently investigated. The impact of a questionnaire's length on the results is a popular topic, see, e.g., \cite{roszkowski1990} or \cite{hallal2014}. But it also enables, for example, to investigate whether the number of animals in a litter or a cage influences the respective development status like it was done in \cite{hughes1973} or \cite{chvedoff1980}.
In such cases, the group membership depends primarily on the length of the respective observation vector. Settings with different dimensions in different groups have only scarcely been investigated up to now but were, for example, considered in \cite{friedrich2017permuting}. 
High dimensional split plot designs are the subject of many works like \cite{brunner2012},\cite{happ2016} or \cite{harrar2016}. Moreover, \cite{sattler2018} and \cite{sattler2021} also considered frameworks,where in addition to the dimension and the sample size, the number of groups is allowed to increase.
But none of these works allow a group depending dimension $d_i$, for the i-th group. Therefore we aimed to expand the work of \cite{sattler2018} for this setting. Thereby the previous family of hypothesis matrices has to be reconsidered. Moreover, it is clear that some hypotheses like equality of means make no sense in such situations. \\
  The presented paper is organized as follows. In \Cref{mod}, we introduce the statistical model, the considered hypotheses and the used asymptotic frameworks. Based on this in \Cref{Test}, a test statistic, necessary estimators and an additional small sample approximation are developed.
To investigate the behaviour of the developed test, in \Cref{simulations} simulations regarding the type-I-error rate were done for different settings. This paper closes with a short conclusion. 
For better readability and brevity, all proofs and some further estimators are shifted to the appendix.

\section{\textsc{Statistical Model and Hypotheses}} \label{mod}
We want to adapt the model and the results of \cite{sattler2018} for the more general setting with different dimensions in the groups. Such a split-plot design was also considered in \cite{friedrich2017permuting}. We assume for positiv definite covariance matrices $\vSigma_i\in \R^{d_i\times d_i}$,
\[{\vX}_{i,j}= ({X}_{i,j,1},\dots,{X}_{i,j,d_i})^\top \stackrel{ind}{\sim}\mathcal{N}_{d_i}\left(\vmu_i,\vSigma_i\right)\hspace{0,2cm}j=1,\dots,n_i,\hspace{0,2cm} i=1,\dots,a ,\]
whereby each vector represents the measurement of one independent subject or object. As usual with $n_i\in \N$ observation vectors in $a\in \N$ groups $i=1,\dots, a$, we have a total number of $N=\sum_{i=1}^a n_i$ random vectors. Moreover the dimension of the pooled vector is denoted by $D=\sum_{i=1}^a d_i$. This framework also allows a factorial structure by splitting up the indices,  regarding time, group or both, as done in \cite{kon:2015} for example.
 In this work, the usual condition for designs with several groups $n_i/{N}\to \kappa_i\in(0,1]$ for  $i=1,...,a$ is not necessary.\\

Since a Kronecker product of wholeplot and subplot matrix can not be used in a model allowing groups with different dimensions, we consider another class of null hypotheses.

Thereto, a block matrix \[\vT=\begin{pmatrix}
\vT_{11}&...&\vT_{1a}\\
\vdots&\ddots&\vdots\\
\vT_{a1}&\cdots&\vT_{aa}
\end{pmatrix}\in \R^{D\times D}\] which is idempotent and symmetric with components $\vT_{ij}\in \R^{d_i\times d_j}$ is used to formulate our null hypothesis through $\mathcal{H}_0:\vT\vmu=\vnull_D$. Unfortunately the idempotence and symmetry of the components do not follow from the fact that $\vT$ is a projection matrix and vice versa. Therefore we can not assume the parts of the hypothesis matrix to be quadratic, neither idempotent or symmetric. However, through the symmetry of $\vT$ it holds $\vT_{ij}=\vT_{ji}^\top$ for $i,j \in \N_a:=\{1,...,a\}$. One important hypothesis, which is part of this model, compares the averaged value for the repeated measurement between two groups.

Finally, the so far used asymptotic frameworks have to be reconsidered for the case of different dimensions between the groups. An essential element of the approach from \cite{sattler2018} is that the dimension of the pooled mean vector goes to infinity. For equal dimensions, this was $ad$ and therefore, either the dimension or number of groups needed to go to infinity. But in this new setting, $ad$ is replaced by $D$. Thus, the number of groups or at least one dimension has to go to infinity, while the other ones could be fixed. This allows very unbalanced dimensions and settings with fixed dimensions in some groups and increasing dimensions in others, which can be seen as semi-high-dimensional. As a result, comparing data sets from trials with fixed dimensions with high-dimensional data sets is possible. To our knowledge, such a comparison has not yet been part of other papers and therefore offers many possibilities.\\

 So the new, more general frameworks are

\begin{eqnarray}
a\in\N\; \text{ fixed and}&\phantom{1}\hspace*{0.2cm} \min(\max(d_1,...,d_a),n_1,...,n_a)\to \infty, \label{asframe1}\\
\forall i \in \N_a,\  d_i\in\N\; \text{ fixed }   \text{and}&\phantom{1}\hspace*{2.1cm}  \min(a,n_1,...,n_a))\to \infty, \label{asframe2}\\
\text{or}& \min(a,\max(d_1,...,d_a),n_1,...,n_a)\to \infty, \label{asframe3}
\end{eqnarray}

especially including the semi-high-dimensional settings as well as all frameworks from \cite{sattler2018} for the special case $d_i\equiv d$.

\section{Test statistics and their asymptotics}\label{Test}

To investigate the validity of the  {null hypothesis} $H_0(\vT):\vT\vmu=\vnull$ in these asymptotic frameworks, we use
$Q_N=N\cdot\overline {\vX}^\top \vT\overline {\vX}.$
 Here  the vector of pooled group means is denoted by ${\overline{\vX}=(\overline\vX_1^\top,\dots\overline\vX_a^\top)^\top}$ with $\overline {\vX}_{i} = n_i^{-1} \sum_{j=1}^{n_i} \vX_{i,j}, i=1,\dots,a$. 
Unfortunately, the random variable {$Q_N$} tends to converge to infinity, for $d\to \infty$ or $a\to \infty$ and many covariance matrices $\vSigma_i$. We use  
 the standardized quadratic form {given by}

\[\widetilde W_N=\frac{Q_N-\E_{\mathcal H_0}(Q_N)}{\sqrt{\Var_{\mathcal H_0}(Q_N)}},\]
to avoid this behaviour, which also enables us to evaluate all limit distributions in detail.\\

With  $\vV_N=\bigoplus_{i=1}^a \frac{N}{n_i}\vSigma_i$ we know 
$\sqrt{N}\ \vT\overline \vX\stackrel{\mathcal H_0}{\sim}\mathcal{N}_D\left(\vnull_D,\vT\vV_N\vT\right)$, and therefore, the expectation and variance of the quadratic form is known and can be calculated as

 \begin{eqnarray}\E_{\mathcal H_0}(Q_N)&=&\tr(\vT\vV_N \vT)=\tr(\vT\vV_N)=\sum\limits_{i=1}^a\frac{N}{n_i}\tr(\vT_{ii}\vSigma_{ii})\label{EQF}\\
\Var_{\mathcal H_0}(Q_N)&=&2\tr((\vT\vV_N)^2)=2 \sum\limits_{i=1}^a\sum\limits_{r=1}^a \frac{N^2}{n_i n_r} \tr\left(\vT_{ir}\vSigma_r \vT_{ri}\vSigma_i\right).\label{VQF}\end{eqnarray}\\

Through the normal distributed mean vectors, with these traces we can reformulate the standardized quadratic form $\widetilde W_N$ through the representation theorem for quadratic forms (\cite{mathaiProvost}) as
\[\widetilde W_N=\frac{Q_N-\tr(\vT\vV_N) }{\sqrt{2\tr((\vT\vV_N)^2)}}\stackrel{\mathcal{D}}{=} \ \sum\limits_{s=1}^{D} 
\frac{\lambda_s}{\sqrt{\sum_{\ell=1}^{D} \lambda_\ell^2}}\left(\frac{C_s-1}{\sqrt{2}}\right).\]

Here $\lambda_s$ are the eigenvalues of $\vT\vV_N\vT$ in decreasing order, and $(C_s)_s$ is a sequence of independent 
$\chi_1^2$-distributed random variables. This representation allows calculating the asymptotic distribution and, in this way, defining asymptotic correct tests.

\begin{theorem}\label{Asymptotik}
{Let $\beta_s={\lambda_s}\Big/{\sqrt{\sum_{\ell=1}^{D}\lambda_\ell^2}}$ for $s=1,\dots,D$. }
Then $\widetilde W_N$ has, under $\mathcal H_0(\vT)$, and one of the 
frameworks  \eqref{asframe1}-\eqref{asframe3}  asymptotically
\begin{itemize} 
\item[a)]a distribution of the form $\sum_{s=1}^rb_s \left(C_s-1\right)/\sqrt 2+\sqrt{1-\sum_{s=1}^r b_s^2}\cdot Z$,  if and only if
\[\text{for all } s\in \N \hspace{0.5cm}\beta_s\to b_s \hspace{0.5cm} \text{as }\hspace{0.2cm} N \to \infty,\]
for a decreasing sequence $(b_s)_s$ in $[0,1]$ with  $r:=\#\{b_s\neq 0\}$, while $C_s\stackrel{i.i.d.}{\sim} \chi_1^2$, $Z\sim \mathcal{N}(0,1)$.
\item[b)]a distribution of the form $\sum_{s=1}^\infty b_s \left(C_s-1\right)/\sqrt 2$,  if 
\[\text{for all } s\in \N \hspace{0.5cm}\beta_s\to b_s \hspace{0.5cm} \text{as }\hspace{0.2cm} N \to \infty,\]
for a decreasing sequence $(b_s)_s$ in $(0,1)$ with $\sum_{s=1}^\infty b_s^2=1$ and $C_s\stackrel{i.i.d.}{\sim} \chi_1^2$.

\end{itemize}
\end{theorem}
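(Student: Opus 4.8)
My plan is to argue via characteristic functions and Lévy's continuity theorem, exploiting an $N$-free bound on the $\beta_s$ that makes tails uniformly negligible. By the representation displayed just before the theorem, $\widetilde W_N$ has the same law as $\sum_{s=1}^{D}\beta_s(C_s-1)/\sqrt2$ with independent $C_s\sim\chi_1^2$ and $\sum_{s=1}^{D}\beta_s^2=1$, so
\[
\varphi_{\widetilde W_N}(t)=\prod_{s=1}^{D}\varphi(\beta_s t),\qquad
\varphi(u)=\E\!\left[e^{\,iu(C-1)/\sqrt2}\right]=e^{-iu/\sqrt2}\,(1-\sqrt2\,iu)^{-1/2},
\]
and the principal branch of $\log\varphi$ is well defined because $1-\sqrt2\,iu$ has real part $1$. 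Set $g(u)=\log\varphi(u)+u^2/2$; expanding $-\tfrac12\log(1-\sqrt2\,iu)$ gives $|g(u)|\le C|u|^3$ for $|u|\le\delta$. Since $\beta_1\ge\beta_2\ge\cdots\ge0$ and $\sum_s\beta_s^2=1$, we get the crucial $N$-free bound $s\beta_s^2\le\sum_{j\le s}\beta_j^2\le1$, i.e.\ $\beta_s\le s^{-1/2}$. Hence $\log\varphi_{\widetilde W_N}(t)=-\tfrac{t^2}{2}+\sum_{s=1}^{D}g(\beta_s t)$, and for fixed $t$ the tail $\sum_{s>K}|g(\beta_s t)|\le C|t|^3\sum_{s>K}s^{-3/2}$ is negligible uniformly in $N$.

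\textbf{The ``if'' parts.} I would prove the unified statement: if $\beta_s\to b'_s$ for every $s$, for some non-increasing $(b'_s)$ in $[0,1]$ (then necessarily $\sum_s(b'_s)^2\le1$ by Fatou), then $\widetilde W_N\dist\sum_{s\ge1}b'_s(C_s-1)/\sqrt2+\bigl(1-\sum_s(b'_s)^2\bigr)^{1/2}Z$ (intuitively, the first $K$ coordinates carry the $\chi^2$ atoms while the remainder is asymptotically Gaussian). Splitting $\sum_{s=1}^{D}g(\beta_s t)=\sum_{s\le K}+\sum_{s>K}$, the finite part tends as $N\to\infty$ to $\sum_{s\le K}g(b'_s t)$ by continuity of $g$, while both tails (for $\beta_s$ and for $b'_s$) are $\le C|t|^3\sum_{s>K}s^{-3/2}\to0$ as $K\to\infty$; letting $N\to\infty$ and then $K\to\infty$ gives
\[
\log\varphi_{\widetilde W_N}(t)\ \longrightarrow\ -\tfrac{t^2}{2}\Bigl(1-\textstyle\sum_{s}(b'_s)^2\Bigr)+\sum_{s\ge1}\log\varphi(b'_s t),
\]
which is the log-characteristic function of the claimed limit (the series $\sum_s\log\varphi(b'_st)$ converges absolutely by the same Taylor/tail bound, and $\sum_s b'_s(C_s-1)/\sqrt2$ converges a.s.\ because $\sum_s(b'_s)^2<\infty$). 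Lévy's theorem concludes. Specializing to $r:=\#\{b_s\neq0\}<\infty$ yields a), and to $\sum_s b_s^2=1$ (so the Gaussian summand disappears) yields b).

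\textbf{The ``only if'' part of a).} First I would show the map $(b_s)\mapsto\mathcal L\bigl(\sum_s b_s(C_s-1)/\sqrt2+(1-\sum_s b_s^2)^{1/2}Z\bigr)$ is injective on non-increasing $[0,1]$-sequences with $\sum_s b_s^2\le1$. Such a law has a moment generating function finite near $0$, hence is determined by its cumulants, and for $k\ge3$ one computes $\kappa_k=2^{k/2-1}(k-1)!\sum_s b_s^k$; so every power sum $p_k=\sum_s b_s^k$ ($k\ge3$) is recovered, and these determine the non-increasing sequence $(b_s)$ --- e.g.\ $b_1=\lim_k p_k^{1/k}$, subtract its contribution, iterate, using that only finitely many $b_s$ exceed any $\varepsilon>0$ --- and hence $\sum_s b_s^2$ and the Gaussian coefficient as well. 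Now assume $\widetilde W_N\dist$ the limit in a). If $\beta_{s_0}\not\to b_{s_0}$ for some $s_0$, pick a subsequence on which $|\beta_{s_0}-b_{s_0}|\ge\varepsilon$ and, by a diagonal argument (the $\beta_s$ lie in $[0,1]$), pass to a further subsequence along which $\beta_s\to b'_s$ for all $s$; the ``if'' part identifies the weak limit along it as $\mathcal L\bigl(\sum_s b'_s(C_s-1)/\sqrt2+(1-\sum_s(b'_s)^2)^{1/2}Z\bigr)$, which must coincide with the prescribed limit law, so injectivity forces $b'_s=b_s$ for all $s$, contradicting $|\beta_{s_0}-b_{s_0}|\ge\varepsilon$. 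Thus $\beta_s\to b_s$ for every $s$; the analogous argument also gives the converse in b).

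\textbf{Anticipated difficulty.} The forward direction is essentially routine once the $N$-free bound $\beta_s\le s^{-1/2}$ is isolated, since it renders the tail of $\sum_s g(\beta_s t)$ uniformly small. The substantive part is the ``only if'' direction: it needs (i) a convergence statement robust enough to survive passage to subsequences and (ii) the fact that the limit law pins down the entire sequence $(b_s)$ --- a countable cumulant/moment-determinacy argument --- and that is where I expect the main effort to lie.
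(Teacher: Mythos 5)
Your proof is correct. Note first that, despite the paper's claim that all proofs are collected in the appendix, Theorem~\ref{Asymptotik} is in fact not proved there at all: the authors implicitly inherit the argument from the corresponding theorem of \cite{sattler2018}, whose proof runs along exactly the lines you propose (representation as a weighted sum of centred $\chi_1^2$ variables, characteristic functions, L\'evy's continuity theorem). So your write-up is essentially a self-contained reconstruction of the intended argument rather than a divergent route. The two load-bearing points are exactly the ones you isolate: the $N$-free bound $\beta_s\le s^{-1/2}$ coming from monotonicity and $\sum_s\beta_s^2=1$, which makes the tail of $\sum_s g(\beta_s t)$ small uniformly in $N$ and lets you interchange the limits in $N$ and in the truncation level $K$; and, for the ``only if'' direction, the injectivity of the parametrization $(b_s)\mapsto\mathcal L\bigl(\sum_s b_s(C_s-1)/\sqrt2+(1-\sum_s b_s^2)^{1/2}Z\bigr)$, which you correctly obtain from moment determinacy (the MGF is finite near $0$ since $b_1\le1$) together with the cumulant identity $\kappa_k=2^{k/2-1}(k-1)!\sum_s b_s^k$ for $k\ge3$ and the recovery of a non-increasing sequence from its power sums. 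The subsequence--diagonal argument then closes the converse cleanly. The only cosmetic caveat is that the cubic bound $|g(u)|\le C|u|^3$ holds only for $|u|\le\delta$, so the truncation level $K$ must be taken $\ge(|t|/\delta)^2$ before the tail estimate applies; you use $\beta_s\le s^{-1/2}$ to guarantee this, so no gap results.
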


Since usual dimensional stable estimators for traces require idempotence and symmetry, it is not possible to estimate each of the traces from \eqref{EQF} and \eqref{VQF} separately. Therefore, with $\vell_1=(\ell_{1,1},...,\ell_{1,a})$ and $\vell_2=(\ell_{2,1},...,\ell_{2,a})$ we use a vector $\vZ$ given by

 \[\vZ_{(\vell_1,\vell_2)}:=\left(\sqrt{\frac{N}{n_1}}\left(\vX_{1,\ell_{1,1}}-\vX_{1,\ell_{2,1}}\right)^\top\textbf{,}\dots\textbf{,}\sqrt{\frac{N}{n_a}}\left(\vX_{a,\ell_{1,a}}-\vX_{a,\ell_{2,a}}^\top\right)^\top\right), \]
as it was done in \cite{sattler2018}.
For $\ell_{1,1}\neq \ell_{2,1} \in \N_{n_1},....,\ell_{1,a}\neq \ell_{2,a} \in \N_{n_a}$, it then holds $\vT\vZ\sim\mathcal{N}_D\left(\vnull_D,2 \vT\vV_N\vT\right)$. Therefore unbiased trace estimators for 
$\tr(\vT\vV_N\vT)$ resp. $\tr\left(\left(\vT\vV_N\vT\right)^2\right)$ are given through

\[A_{1}=\sum\limits_{\begin{footnotesize}\substack{\ell_{1,1},\ell_{2,1}=1\\\ell_{1,1}< \ell_{2,1}}\end{footnotesize}}^{n_1}
\dots\sum\limits_{\begin{footnotesize}\substack{\ell_{1,a}, \ell_{2,a}=1\\\ell_{1,a}<\ell_{2,a}}\end{footnotesize}}^{n_a}
\frac{{\vZ_{(\vell_1,\vell_2)}}^\top \vT{\vZ_{(\vell_1,\vell_2)}}}{2\cdot \prod_{i=1}^a\binom{n_i}{2} }\]
and
\[A_{2}=\sum\limits_{\begin{footnotesize}\substack{\ell_{1,1}\neq\dots\neq \ell_{4,1}=1\\\ell_{1,1}<\ell_{1,1},\dots,\ell_{3,1}<\ell_{4,1}}\end{footnotesize}}^{n_1}
\dots\sum\limits_{\begin{footnotesize}\substack{\ell_{1,a}\neq\dots\neq \ell_{4,a}=1\\\ell_{1,a}<\ell_{2,a},\dots,\ell_{3,a}< \ell_{4,a}}\end{footnotesize}}^{n_a}
\frac{\left[{\vZ_{(\vell_1,\vell_2)}}^\top \vT{\vZ_{(\vell_3,\vell_4)}}\right]^2}{4\cdot \prod\nolimits_{i=1}^a 6 \cdot \binom{n_i}{4} }.\]

These are U-statistics based on symmetrized quadratic forms as kernels.

\begin{Le}\label{estimators}
In each of our frameworks  \eqref{asframe1}-\eqref{asframe3} it holds
\begin{itemize}
\item[a)]
$A_1$ is an unbiased estimator for $\tr(\vT\vV_N\vT)$, which is addition dimensional stable and ratio-consistent, if $\prod_{i=1}^a \frac{(n_i-2)!\cdot (n_i-2)!}{n_i!(n_i-4)!}\to 1$ is fulfilled.
\item[b)]
$A_2$ is an unbiased estimator for $\tr\left((\vT\vV_N\vT)^2\right)$, which is addition dimensional stable and ratio-consistent, if $\prod_{i=1}^a \frac{(n_i-4)!\cdot (n_i-4)!}{n_i!(n_i-8)!}\to 1$ is fulfilled.
\end{itemize}
\end{Le}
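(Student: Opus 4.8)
The plan is to follow the template of \cite{sattler2018}: first obtain exact unbiasedness (hence the ``dimensional stability'', i.e.\ unbiasedness uniformly over $N$ and over all dimension configurations) from the Gaussian structure, and then bound the variance to get ratio-consistency under the stated combinatorial conditions. For the first step I would record that for every admissible index tuple the block vector $\vZ_{(\vell_1,\vell_2)}$ is centred Gaussian with covariance $2\vV_N$ --- since the $i$-th block $\sqrt{N/n_i}(\vX_{i,\ell_{1,i}}-\vX_{i,\ell_{2,i}})$ has covariance $2(N/n_i)\vSigma_i$ and distinct groups are independent --- and that its law does not depend on which admissible tuple is chosen, so that all summands of $A_1$, and likewise all summands of $A_2$, are identically distributed. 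Idempotence and symmetry of $\vT$ give $\vZ_{(\vell_1,\vell_2)}^\top\vT\vZ_{(\vell_1,\vell_2)}=\|\vT\vZ_{(\vell_1,\vell_2)}\|^2$, whose expectation is $\tr(2\vT\vV_N\vT)=2\tr(\vT\vV_N\vT)$; dividing by $2$ and averaging yields $\E A_1=\tr(\vT\vV_N\vT)$ for every $N$ and every configuration. For $A_2$, when the four indices in each group are distinct the vectors $\vT\vZ_{(\vell_1,\vell_2)}$ and $\vT\vZ_{(\vell_3,\vell_4)}$ are independent $\mathcal{N}_D(\vnull_D,2\vT\vV_N\vT)$, and conditioning on the second factor turns the bilinear form into a centred Gaussian, so a two-step Gaussian-moment computation gives $\E[(\vZ_{(\vell_1,\vell_2)}^\top\vT\vZ_{(\vell_3,\vell_4)})^2]=4\tr((\vT\vV_N\vT)^2)$ and hence $\E A_2=\tr((\vT\vV_N\vT)^2)$. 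Throughout I would use that $\vT\vV_N\vT=\vT\vV_N\vT^\top$ is positive semidefinite, giving the trace inequalities $\tr((\vT\vV_N\vT)^k)\le(\tr((\vT\vV_N\vT)^2))^{k/2}$ for $k\ge2$, in particular $\tr((\vT\vV_N\vT)^2)\le(\tr(\vT\vV_N\vT))^2$.

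For ratio-consistency, write $A_1=M_1^{-1}\sum_m U_m$ with $M_1=\prod_i\binom{n_i}{2}$ and expand $\Var(A_1)=M_1^{-2}\sum_{m,m'}\Cov(U_m,U_{m'})$. If the tuples $m$ and $m'$ use disjoint indices in every group then $U_m$ and $U_{m'}$ are independent and the covariance vanishes; the fraction of such pairs is exactly $\prod_i\binom{n_i-2}{2}/\binom{n_i}{2}=\prod_i\frac{(n_i-2)!\,(n_i-2)!}{n_i!\,(n_i-4)!}$, which tends to $1$ by hypothesis, so the overlapping pairs form a $o(1)$-fraction of all $M_1^2$ pairs. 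Since the $U_m$ are identically distributed, Cauchy--Schwarz bounds every covariance by $\Var(U_1)=2\tr((\vT\vV_N\vT)^2)$, so $\Var(A_1)=o\!\big(\tr((\vT\vV_N\vT)^2)\big)$; dividing by $(\E A_1)^2$ and using $\tr((\vT\vV_N\vT)^2)\le(\tr(\vT\vV_N\vT))^2$ gives $\Var(A_1)/(\E A_1)^2\to0$, and Chebyshev yields $A_1/\tr(\vT\vV_N\vT)\pkonv1$. The case of $A_2$ is identical in structure: the disjoint-index pairs (now four indices per group in each factor) again contribute nothing, their fraction is $\prod_i\binom{n_i-4}{4}/\binom{n_i}{4}=\prod_i\frac{(n_i-4)!\,(n_i-4)!}{n_i!\,(n_i-8)!}\to1$, and Cauchy--Schwarz reduces everything to bounding $\Var\!\big((\vZ_{(\vell_1,\vell_2)}^\top\vT\vZ_{(\vell_3,\vell_4)})^2\big)$; conditioning on one factor, the fourth moment is three times the squared conditional variance, a second Gaussian computation produces the two terms $(\tr((\vT\vV_N\vT)^2))^2$ and $\tr((\vT\vV_N\vT)^4)$, and the PSD trace inequality bounds the second by the first, so this variance is $O\!\big((\tr((\vT\vV_N\vT)^2))^2\big)$. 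Hence $\Var(A_2)=o\!\big((\E A_2)^2\big)$ and Chebyshev finishes the proof.

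The main obstacle is the combinatorial bookkeeping rather than any single estimate: one must verify that ``$m$ and $m'$ overlap in some group'' is precisely the complement of the event whose probability is the product displayed in each hypothesis, so that the stated conditions are exactly what forces the overlapping contributions to vanish --- and this must be argued uniformly over the three frameworks \eqref{asframe1}--\eqref{asframe3}, in particular when $a\to\infty$ with the individual $n_i$ large but the product over $i$ not automatically converging to $1$. The only genuinely computational ingredient is the fourth-moment bound entering the variance of $A_2$; everything else is the Gaussian quadratic-form calculus of \cite{sattler2018} transported to the present block setting.
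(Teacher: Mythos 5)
Your proposal is correct and follows essentially the same route as the paper's proof: unbiasedness from the Gaussian quadratic-form moment $\E[\vZ^\top\vT\vZ]=\tr(2\vT\vV_N\vT)$, a variance expansion in which pairs of index tuples that are disjoint in every group contribute zero covariance, the remaining covariances bounded via Cauchy--Schwarz by the kernel variance of order $\Lan(\tr^2(\vT\vV_N))$ resp.\ $\Lan(\tr^2((\vT\vV_N)^2))$, and the observation that the fraction of overlapping pairs is exactly one minus the product appearing in each hypothesis. You in fact spell out the fourth-moment computation for the $A_2$ kernel that the paper delegates to its reference, so no gap.
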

\begin{re}\phantom{1}\vspace*{-0.5 \baselineskip}
\begin{itemize}
\item[1)]These conditions are fulfilled, for example,  if there  exists $q>0$ with $n_{\min}=\Lan(a^q)$ or similar.
\item[2)] The required number of summations and relating to that, the computation time is increasing really fast for a large number of groups or large sample sizes. From this reason the usage of subsampling versions $A_1^\star(\Upsilon)$ and $A_2^\star(\Upsilon)$  is reasonable, where $\Upsilon\in \N$ denotes the number of summations. As a result, the estimator is not calculated for all possible index combinations but a random subset thereof with size $\Upsilon$. As a result, the computation effort becomes manageable. These estimators and their properties can also be found in the appendix.
\item[3)] In semi-high-dimensional settings, estimating the finite groups' covariance matrices and thus estimating some traces would be attractive. But through the structure of $\vT$, this makes less sense since the other summands of \eqref{EQF} and \eqref{VQF} can not be estimated on their own.
\end{itemize}
\end{re}

 Through the differences of two observations from the same group in defining the random vector $\vZ$, these estimators can be used under both the null hypothesis and the alternative. This is an important property that helps assess the power or similar. Analogical, we could define a vector without these differences. Then the corresponding estimators only work under the null hypothesis, but the amounts of possible combinations are substantially smaller. \\
With these estimators, the estimated version of our test statistic can be formulated by
\[W_N=\frac{Q_N-A_1}{\sqrt{2\cdot A_2}}\ \text{ resp.}\ W_N^\star=\frac{Q_N-A_1^\star(\Upsilon)}{\sqrt{2\cdot A_2^\star(\Upsilon)}} .\]

Through the following Lemma,  the usage of the estimated version instead of the exact one is justified.
\begin{theorem}\label{Theorem4}
Under $\mathcal H_0(T):\vT\vmu=\vnull_{D}$ and one of the 
frameworks \eqref{asframe1}-\eqref{asframe3} the estimated test statistics $W_N$ and $W_N^\star$ have the same asymptotic limit distributions as $\widetilde W_N$, if the respective conditions (a)-(b) from Theorem~\ref{Asymptotik} and the conditions of Lemma~\ref{estimators}  are fulfilled.

\end{theorem}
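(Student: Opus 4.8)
The plan is to reduce the assertion to a Slutsky (converging‑together) argument: under $\mathcal H_0(\vT)$ and any of \eqref{asframe1}--\eqref{asframe3} I would show that $W_N=\widetilde W_N+o_p(1)$ and $W_N^\star=\widetilde W_N+o_p(1)$, which together with the distributional limit of $\widetilde W_N$ from \Cref{Asymptotik}(a)--(b) immediately transfers those limit laws to the estimated statistics. Note first that $\widetilde W_N$ is automatically tight: from the representation $\widetilde W_N\stackrel{\mathcal D}{=}\sum_{s=1}^{D}\beta_s(C_s-1)/\sqrt2$ with $C_s\stackrel{i.i.d.}{\sim}\chi_1^2$ and $\sum_s\beta_s^2=1$ one gets $\E_{\mathcal H_0}(\widetilde W_N)=0$ and $\Var_{\mathcal H_0}(\widetilde W_N)=1$, hence $\widetilde W_N=O_p(1)$ irrespective of whether conditions (a)--(b) hold.

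Write $\mu_N:=\tr(\vT\vV_N\vT)=\E_{\mathcal H_0}(Q_N)$ and $\tau_N^2:=\tr((\vT\vV_N\vT)^2)$, so that $\widetilde W_N=(Q_N-\mu_N)/\sqrt{2\tau_N^2}$. For $W_N$, splitting $Q_N-A_1=(Q_N-\mu_N)+(\mu_N-A_1)$ and factoring $\sqrt{2A_2}=\sqrt{2\tau_N^2}\,\sqrt{A_2/\tau_N^2}$ yields the identity
\[
W_N-\widetilde W_N=\widetilde W_N\!\left(\sqrt{\tfrac{\tau_N^2}{A_2}}-1\right)+\frac{\mu_N-A_1}{\sqrt{2\tau_N^2}}\cdot\sqrt{\tfrac{\tau_N^2}{A_2}}.
\]
The two error terms would then be handled with \Cref{estimators}: part (b) gives ratio‑consistency of $A_2$ for $\tau_N^2$ under the stated product condition on the $n_i$, hence $\sqrt{\tau_N^2/A_2}\stoch1$, and since $\widetilde W_N=O_p(1)$ the first summand is $o_p(1)$; for the second summand it then suffices to show $(\mu_N-A_1)/\sqrt{\tau_N^2}\stoch0$, which is precisely the dimensional‑stability statement of part (a) — $A_1$ is unbiased for $\mu_N$ with $\Var(A_1)=o(\tau_N^2)$ — so that Chebyshev's inequality closes the gap. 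This gives $W_N=\widetilde W_N+o_p(1)$, and the Slutsky argument finishes the first claim.

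For $W_N^\star$ the same decomposition applies verbatim with $A_1,A_2$ replaced by $A_1^\star(\Upsilon),A_2^\star(\Upsilon)$; here I would invoke the properties of the subsampling estimators established in the appendix, namely that randomising over a size‑$\Upsilon$ subset of the index tuples preserves unbiasedness in mean and inflates the variance only in a controlled way, so that ratio‑consistency for $\tau_N^2$ and dimensional stability for $\mu_N$ are retained; the Slutsky step then goes through unchanged, giving $W_N^\star=\widetilde W_N+o_p(1)$.

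The hard part is not this short computation but the step $(\mu_N-A_1)/\sqrt{\tau_N^2}\stoch0$, i.e. the dimensional‑stability half of \Cref{estimators}(a). Mere ratio‑consistency $A_1/\mu_N\stoch1$ does \emph{not} suffice, because $\mu_N=\sum_s\lambda_s$ can be of strictly larger order than $\sqrt{\tau_N^2}=\sqrt{\sum_s\lambda_s^2}$ (for instance in the Gaussian‑limit regime, where the $\beta_s$ are roughly equal and all $o(1)$): the error of $A_1$ must be negligible on the finer scale $\sqrt{\Var_{\mathcal H_0}(Q_N)}$. Establishing $\Var(A_1)/\tau_N^2\to0$ — a fourth‑moment estimate for the U‑statistic $A_1$ that must be made uniform over all three heterogeneous frameworks, in particular in the semi‑high‑dimensional case where some $d_i$ stay fixed while others and/or $a$ diverge — is the genuine content, and it is exactly what the product conditions in \Cref{estimators} are designed to secure. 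Once that lemma and its subsampling analogue are granted, the present argument is the Slutsky reduction sketched above.
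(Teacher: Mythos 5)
Your proposal is correct and takes essentially the same route as the paper: the identical Slutsky decomposition of $W_N$ into $\widetilde W_N$ plus a multiplicative factor $\sqrt{\tr((\vT\vV_N)^2)/A_2}\stoch 1$ and an additive term $(A_1-\tr(\vT\vV_N))/\sqrt{2\tr((\vT\vV_N)^2)}\stoch 0$, both supplied by \Cref{estimators}, with the subsampling case handled verbatim via the analogous properties of $A_1^\star(\Upsilon)$ and $A_2^\star(\Upsilon)$. Your explicit remark that the $A_1$-term must vanish on the scale $\sqrt{\tr((\vT\vV_N)^2)}$ rather than $\tr(\vT\vV_N)$ — so that dimensional stability, not mere ratio-consistency, is the operative property — is precisely what the paper's unexplained $o_P(1)$ step relies on.
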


Unfortunately, in our general setting, it is nearly impossible to estimate $\beta_1$ dimensional stable, which makes the usage of Theorem~\ref{Asymptotik} difficult.
To this aim \cite{pEB} as well as \cite{sattler2018} considered a standardized centred random variable $K_f=\frac{\chi_f^2-f}{\sqrt{2f}}$, with appropriate degrees of freedom. To get a accordance of the first three moments, usually $f_P=\tr^3\left((\vT\vV_N)^2\right)/\tr^2\left((\vT\vV_N)^3\right)$, the so called Pearson-approximation, is used although  also other choices can make sense. In \cite{sattler2018}, the asymptotic distribution of $K_{f_P}$ for $\beta_1\to\{0,1\}$ was investigated. We expand their result in the following theorem

\begin{theorem}\label{Theorem5}
The random variable $K_{{f_P}}$ has, as $N \to \infty$, asymptotically 
\begin{itemize} 
\item[a)]a standard normal distribution  if and only if $\beta_1\to 0$,
\item[b)]a standardized $\left(\chi_1^2-1\right)/\sqrt{2}$ distribution if and only if $\beta_1\to 1$.
\end{itemize}
\vspace{-.5cm}
\end{theorem}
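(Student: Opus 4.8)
The plan is to observe that the statement is purely analytic: the law of $K_{f_P}=(\chi^2_{f_P}-f_P)/\sqrt{2f_P}$ depends on $N$ only through the scalar $f_P=\tr^3((\vT\vV_N)^2)/\tr^2((\vT\vV_N)^3)$, so the whole task is to translate the condition on $\beta_1$ into a condition on $f_P$. Write $p_k:=\tr((\vT\vV_N\vT)^k)=\sum_{s=1}^{D}\lambda_s^k$ for the power sums of the nonnegative eigenvalues of the positive‑semidefinite matrix $\vT\vV_N\vT$. Using $\vT^2=\vT$ one has $p_2=\tr((\vT\vV_N)^2)$ and $p_3=\tr((\vT\vV_N)^3)$, hence $f_P=p_2^3/p_3^2$, while $\beta_1=\lambda_1/\sqrt{p_2}$; discarding the degenerate case $\vT\vV_N\vT=\vnull$, we have $0<\beta_1\le 1$ and $f_P\in[1,\infty)$.

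The key step is a two‑sided bound on $f_P$ in terms of $\beta_1$. Since $0\le\lambda_s\le\lambda_1$ for every $s$, we get $\lambda_1^3\le p_3=\sum_s\lambda_s\lambda_s^2\le\lambda_1 p_2$; substituting $\lambda_1=\beta_1\sqrt{p_2}$, squaring, and dividing $p_2^3$ by the result yields
\[\beta_1^{-2}\ \le\ f_P\ \le\ \beta_1^{-6}\qquad\text{for every }N.\]
In particular $f_P\to\infty\iff\beta_1\to0$ and $f_P\to 1\iff\beta_1\to1$.

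Next I would invoke standard facts about the family $K_f$, $f\in(0,\infty)$: (i) if $f_N\to f_\infty\in(0,\infty)$ then $K_{f_N}\dist K_{f_\infty}$ (pointwise convergence of the Gamma densities, plus Slutsky for the normalising constants); (ii) if $f_N\to\infty$ then $K_{f_N}\dist\mathcal N(0,1)$ (the $n$‑th cumulant of $K_f$ equals $2^{n/2-1}(n-1)!\,f^{1-n/2}$, which tends to $0$ for $n\ge3$ while it is $0$ and $1$ for $n=1,2$); (iii) $K_f$ has mean $0$ and variance $1$ for all $f$, so $\{K_{f_N}\}$ is tight; (iv) $f\mapsto\mathrm{law}(K_f)$ is injective on $(0,\infty)$ and no $K_f$ equals $\mathcal N(0,1)$, because the skewness of $K_f$ is $\sqrt{8/f}$, strictly decreasing and strictly positive. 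The sufficiency directions are then immediate: $\beta_1\to0$ forces $f_P\to\infty$, hence $K_{f_P}\dist\mathcal N(0,1)$ by (ii); $\beta_1\to1$ forces $f_P\to1$, hence $K_{f_P}\dist K_1=(\chi^2_1-1)/\sqrt2$ by (i).

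For the converse — the part genuinely extending \cite{sattler2018} — I would argue by tightness and uniqueness of weak limits. Because $f_P\ge1$, every subsequence of $(f_P)$ has a further subsequence along which $f_P$ converges in $[1,\infty]$; by (i)–(ii) the corresponding subsequence of $(K_{f_P})$ then converges to $K_{f_\infty}$ for finite $f_\infty$, or to $\mathcal N(0,1)$ when $f_\infty=\infty$. If $K_{f_P}\dist\mathcal N(0,1)$ holds along the whole sequence, uniqueness of the limit together with (iv) forces $f_\infty=\infty$ for every such subsequential limit, so $f_P\to\infty$ and therefore $\beta_1\to0$; symmetrically, $K_{f_P}\dist K_1$ forces every subsequential limit of $f_P$ to be $1$ by (iv), so $f_P\to1$ and $\beta_1\to1$. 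The only slightly delicate point is (iv): one must pin down the limiting laws and verify they are pairwise distinct (and distinct from the Gaussian) so that these subsequence arguments actually close — the power‑sum inequality and everything else is routine.
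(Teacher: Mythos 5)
Your proposal is correct, and it reaches the same conclusion by a noticeably more self-contained route than the paper. The shared skeleton is the subsequence argument: both you and the authors extract convergent subsequences of $f_P$ (and $\beta_1$) via Bolzano--Weierstrass, identify the limit law of $K_{f_P}$ along each subsequence, and force the limit by showing the candidate laws are distinct. The differences are in the three key ingredients. First, where the paper imports Lemma A.8 of \cite{sattler2018} to link $f_P$ and $\beta_1$, you prove the link directly via the power-sum inequality $\lambda_1^3\le p_3\le\lambda_1p_2$, giving the explicit two-sided bound $\beta_1^{-2}\le f_P\le\beta_1^{-6}$; this is correct (using $\vT^2=\vT$ to identify $\tr((\vT\vV_N)^k)$ with the power sums of the eigenvalues of $\vT\vV_N\vT$) and makes the equivalences $f_P\to\infty\Leftrightarrow\beta_1\to0$ and $f_P\to1\Leftrightarrow\beta_1\to1$ transparent. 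Second, for part (a) the paper rules out a finite subsequential limit $f_\infty$ by invoking Cram\'er's decomposition theorem (a $\chi^2_{f_\infty}$ law cannot be Gaussian since it is a sum of two i.i.d.\ non-Gaussian Gamma variables), whereas you use the nonvanishing skewness $\sqrt{8/f_\infty}$ of the fixed law $K_{f_\infty}$; both are valid, yours is more elementary. Third, for part (b) the paper solves an explicit characteristic-function equation to pin down $\gamma=1$, while you appeal to injectivity of $f\mapsto\mathrm{law}(K_f)$ via the strictly decreasing skewness; again both work, and your single injectivity argument handles (a) and (b) uniformly. You also prove the ``if'' directions ((i)--(iii), via cumulants and tightness), which the paper does not reprove but inherits from Theorem 4.1 of \cite{sattler2018}. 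What your approach buys is independence from the cited lemma and from Cram\'er's theorem; what the paper's buys is brevity by leaning on the earlier work. No gaps.
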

Now, if we found a ratio consistent dimensional stable estimator $\hat f_P$ for $f_P$,  we could use this result to develop asymptotic correct level $\alpha$ tests.
Because an estimator for $\tr\left((\vT\vV_N)^2\right)$ was already developed, especially an estimator for $\tr\left((\vT\vV_N)^3\right)$ is needed.
In \cite{sattler2018}, an appropriate estimator was developed through

\bqan\label{eq:C5}
A_{3}=\hspace*{-0.2cm}\sum\limits_{\begin{footnotesize}\substack{\ell_{1,1},\dots, \ell_{6,1}=1\\
\ell_{1,1}\neq\dots\neq \ell_{6,1}}\end{footnotesize}
}^{n_1}\dots\hspace*{-0.2cm}\sum\limits_{\begin{footnotesize}\substack{\ell_{1,a},\dots, \ell_{6,a}=1\\
\ell_{1,a}\neq\dots\neq \ell_{6,a}}\end{footnotesize}
}^{n_a}
\frac{{\vZ_{(\vell_1,\vell_2)}}^\top \vT{\vZ_{(\vell_3,\vell_4)}}\cdot{\vZ_{(\vell_3,\vell_4)}}^\top \vT{\vZ_{(\vell_5,\vell_6)}}\Cdot {\vZ_{(\vell_5,\vell_6)}}^\top \vT{\vZ_{(\vell_1,\vell_2)}}}{8\cdot \prod\limits_{i=1}^a\frac {  n_i! }{\left(n_i-6\right)!}}.
\eqan

This is an unbiased estimator for $\tr\left((\vT\vV_N)^3\right)$, but the number of summations makes again a subsampling version $A_3^\star(\Upsilon)$ necessary.

Together with the estimators for $\tr(\left(\vT\vV_N\right)^2)$ we can formulate estimators for $f_P$.

\begin{Le}\label{estimators2}
Define estimators for $f_P$ through

\[\hat{f}_P^A= \frac{A_2^3}{A_3^2}\quad \text{ and  }\quad  \hat{f}_P^{A\star}(\Upsilon)= \frac{(A_2^\star(\Upsilon))^3}{(A_3^\star(\Upsilon))^2}.\]
If it exists some $p>1$ which fulfills $\min(n_1,\dots,n_a)=\Lan\left(a^p\right)$, then under  one of the frameworks  it \eqref{asframe1}-\eqref{asframe3} holds
\begin{itemize}
\item[a)] $\frac{1}{\hat{f}_P^A}-\frac{1}{{f}_P}\stackrel{\mathcal{P}}{\to}0.$
\item[b)] $\frac{1}{\hat{f}_P^{A\star}(\Upsilon)}-\frac{1}{{f}_P}\stackrel{\mathcal{P}}{\to}0.$
\item[c)] The results of \Cref{Theorem5} remains valid, if the degrees of freedom $f_P$ are replaced by its estimators $\hat{f}_P$ or  $\hat{f}_P^{A\star}(\Upsilon)$.
\end{itemize}

\end{Le}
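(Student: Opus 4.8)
The plan is to reduce the whole lemma to the ratio-consistency of the two trace estimators plus one elementary boundedness fact about $f_P$. For part (a) I would rewrite
\[
\frac{1}{\hat{f}_P^A}=\frac{A_3^2}{A_2^3}
=\underbrace{\frac{\bigl(A_3/\tr((\vT\vV_N)^3)\bigr)^{2}}{\bigl(A_2/\tr((\vT\vV_N)^2)\bigr)^{3}}}_{=:R_N}\cdot\underbrace{\frac{\tr^2((\vT\vV_N)^3)}{\tr^3((\vT\vV_N)^2)}}_{=1/f_P},
\]
and note that, since $\beta_\ell\in[0,1]$ with $\sum_\ell\beta_\ell^2=1$, one has $1/f_P=\bigl(\sum_\ell\beta_\ell^3\bigr)^2\le\bigl(\sum_\ell\beta_\ell^2\bigr)^2=1$. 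Hence $1/\hat{f}_P^A-1/f_P=(R_N-1)/f_P$, and because $1/f_P$ is bounded it suffices to show $R_N\stackrel{\mathcal P}{\to}1$; by the continuous mapping theorem this follows from the two ratio-limits $A_2/\tr((\vT\vV_N)^2)\stackrel{\mathcal P}{\to}1$ and $A_3/\tr((\vT\vV_N)^3)\stackrel{\mathcal P}{\to}1$.

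The first ratio-limit is exactly \Cref{estimators}(b); its combinatorial side condition is implied by the growth assumption $\min(n_1,\dots,n_a)=\Lan(a^p)$ with $p>1$, because the product there equals $\prod_{i=1}^a(1+\Lan(n_i^{-1}))$ and $\sum_{i=1}^a n_i^{-1}\le a/\min_i n_i\to0$. For the second one I would first recall that $A_3$ is unbiased for $\tr((\vT\vV_N)^3)$ and then establish $\Var(A_3)/\tr^2((\vT\vV_N)^3)\to0$. Group by group, $A_3$ is a $U$-statistic of order six in the Gaussian vectors $\vZ_{(\vell_1,\vell_2)}$ with $\vT\vZ\sim\mathcal N_D(\vnull_D,2\vT\vV_N\vT)$, so $\Var(A_3)=\E[A_3^2]-\tr^2((\vT\vV_N)^3)$ decomposes, after the disjoint-index block of $\E[A_3^2]$ cancels against $(\E A_3)^2$, into a sum of blocks indexed by the overlap pattern of the two index tuples; each such block is, via Isserlis' theorem, a product of traces $\tr((\vT\vV_N)^m)$ weighted by negative powers of the $n_i$ and by $a$-dependent counting factors, and one has to check that every block is $o(\tr^2((\vT\vV_N)^3))$. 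Making the counting factors negligible is exactly where $p>1$ (rather than merely $p>0$) is used; this is the group-dependent-dimension analogue of the corresponding estimate in \cite{sattler2018}, and it is the only genuinely technical step. Combining the two ratio-limits with the factorisation yields (a).

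Part (b) is the same argument once the subsampling estimators are controlled: $A_2^\star(\Upsilon)$ and $A_3^\star(\Upsilon)$ are unbiased for the same traces, and by the standard incomplete-$U$-statistic variance identity their variances equal those of $A_2,A_3$ plus a term of order $\Upsilon^{-1}$ times the variance of a single kernel, which for $\Upsilon$ chosen as in the definition of the subsampling estimators is again negligible relative to the squared target trace. Hence $A_2^\star(\Upsilon)/\tr((\vT\vV_N)^2)\stackrel{\mathcal P}{\to}1$, $A_3^\star(\Upsilon)/\tr((\vT\vV_N)^3)\stackrel{\mathcal P}{\to}1$, and the same factorisation gives $1/\hat{f}_P^{A\star}(\Upsilon)-1/f_P\stackrel{\mathcal P}{\to}0$.

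For part (c) I would start from the equivalences $\beta_1\to0\Leftrightarrow 1/f_P\to0$ and $\beta_1\to1\Leftrightarrow 1/f_P\to1$, both consequences of $\beta_1^6\le\bigl(\sum_\ell\beta_\ell^3\bigr)^2=1/f_P\le\beta_1^2$ (using $\beta_1^3\le\sum_\ell\beta_\ell^3\le\beta_1\sum_\ell\beta_\ell^2=\beta_1$). By (a)/(b), $1/\hat{f}_P-1/f_P\stackrel{\mathcal P}{\to}0$, so $\hat{f}_P\stackrel{\mathcal P}{\to}\infty$ when $\beta_1\to0$ and $\hat{f}_P\stackrel{\mathcal P}{\to}1$ when $\beta_1\to1$. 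Interpreting $K_{\hat{f}_P}$ through its data-conditional law $t\mapsto\E[F_{\hat{f}_P}(t)]$, where $F_f$ is the distribution function of $K_f$, and using $F_f(t)\to\Phi(t)$ as $f\to\infty$ and $F_f(t)\to F_1(t)$ as $f\to1$ at every continuity point, a dominated/bounded-convergence argument shows that $K_{\hat{f}_P}$ has the same asymptotic law as $K_{f_P}$ in \Cref{Theorem5}; the \emph{only-if} directions follow by a subsequence argument from the displayed equivalences together with the consistency of $\hat{f}_P$ and the non-Gaussianity of $K_f$ for every finite $f$. In particular the $(1-\alpha)$-quantile of $K_{\hat{f}_P}$ converges to that of the limit, which is what is needed for asymptotically correct level-$\alpha$ tests. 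The main obstacle throughout is the variance bound $\Var(A_3)/\tr^2((\vT\vV_N)^3)\to0$ and its subsampling counterpart: the order-six $U$-statistic structure, the fact that the blocks $\vT_{ij}$ are in general neither square nor symmetric nor idempotent, and the possibly growing number of groups $a$ make the classification of the variance blocks delicate, and this is the step that truly consumes the hypothesis $\min(n_1,\dots,n_a)=\Lan(a^p)$ with $p>1$.
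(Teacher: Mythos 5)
Your reduction of (a) to two ratio-limits contains a genuine gap in the second one. You propose to show $A_3/\tr((\vT\vV_N)^3)\stackrel{\mathcal P}{\to}1$ by proving $\Var(A_3)/\tr^2((\vT\vV_N)^3)\to 0$, but this is not attainable in the paper's frameworks: the variance bound that the combinatorics actually deliver (and the one the paper imports from \cite{sattler2018}) is $\Var(A_3)\le \epsilon_N\cdot 27\,\tr^3((\vT\vV_N)^2)$ with $\epsilon_N=1-\prod_{i=1}^a\binom{n_i-6}{6}/\binom{n_i}{6}\to0$. Dividing by $\tr^2((\vT\vV_N)^3)$ costs an extra factor $f_P$, so your step would require $\epsilon_N f_P\to0$; since $f_P$ diverges exactly in the regime $\beta_1\to0$ that the lemma must cover (e.g.\ $f_P\asymp D$ for near-equal eigenvalues of $\vT\vV_N\vT$) and the frameworks impose no relation between $D$ and the $n_i$, the product $\epsilon_N f_P$ can diverge. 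The $U$-statistic variance of $A_3$ genuinely contains overlap blocks of order $\tr^3((\vT\vV_N)^2)/n_{\min}^3$, which are not $o\bigl(\tr^2((\vT\vV_N)^3)\bigr)$ unless $D$ is restricted relative to the sample sizes --- this is precisely why the lemma is stated for $1/\hat f_P^A-1/f_P$ rather than for $\hat f_P^A/f_P$, and why $A_3$ is nowhere claimed to be ratio-consistent for $\tr((\vT\vV_N)^3)$.

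The repair is a different normalization. Set $S_N:=A_3/\tr^{3/2}((\vT\vV_N)^2)$, so that $\E (S_N)=\sum_\ell\beta_\ell^3=f_P^{-1/2}\in[0,1]$ and $\Var(S_N)\le 27\,\epsilon_N\to0$; hence $S_N-f_P^{-1/2}\stackrel{\mathcal P}{\to}0$ and, by boundedness of $f_P^{-1/2}$, also $S_N^2-f_P^{-1}\stackrel{\mathcal P}{\to}0$. Writing $1/\hat f_P^A=S_N^2\cdot\bigl(A_2/\tr((\vT\vV_N)^2)\bigr)^{-3}$ and invoking the ratio-consistency of $A_2$ (your verification of its side condition from $n_{\min}\gtrsim a^p$, $p>1$, is fine) yields (a); the same substitution, combined with the subsampling variance bounds, fixes (b). Your inequalities $\beta_1^6\le 1/f_P\le\beta_1^2$ and the ensuing quantile argument for (c) are correct and consistent with the paper's Theorem~\ref{Theorem5}, but they sit on top of (a)/(b) and therefore inherit the gap until the normalization is changed.
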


With this result, for $\beta_1\to b_1\in\{0,1\}$ we achieve asymptotic correct level $\alpha$ test through $\psi_z^{\star A}=\ind(W_N^{\star A}>z_{1-\alpha})$,   $\psi_{\chi}^{\star A}=\ind(W_N^{\star A}>\chi_{1;1-\alpha}^2)$ and $\varphi_{N}^{\star A}=\ind\{W_N^{\star A}>K_{\hat f_P^{A\star };1-\alpha}\}$. Here, $\chi_{1;1-\alpha}^2$ denotes the $1-\alpha$ quantile of a $\chi_1^2$ distribution and $K_{\hat f_P^{A\star };1-\alpha}$ accordingly the $1-\alpha$ quantile of $K_{\hat f_P^{A\star }}$.\\

Using this approximation makes decision whether $\beta_1\to 0$ or $\beta_1\to 1$ unnecessary. Moreover, from \cite{pEB} and \cite{sattler2018} it is known that it clearly increasing the small sample approximation. Finally through the fact that $f_P\to 1\Leftrightarrow \beta_1\to 1$ and  $f_P\to \infty\Leftrightarrow \beta_1\to 0$, this parameter can be used to determine the behaviour of $\beta_1$, which can not be estimated.

\begin{re}\phantom{1}\vspace*{-0.5 \baselineskip}
\begin{itemize}
\item[1)]
All considered trace estimators had conditions that require a relation between the number of groups and the sample size and therefore are a restriction of our frameworks. So we developed further unbiased dimension stable ratio consistent estimators $B_1(\Upsilon)$, $B_2(\Upsilon)$ and $B_3(\Upsilon)$, without such conditions. Hereby the indices for all groups are the same. Then in the case of unbalanced settings, many observations would not be used, which is why we use a kind of mixing procedure. Because of this fact, they are more complicated and comprehensive and therefore were shifted to the appendix.
\item[2)]
Also for these estimators the necessary number of summations is high, which makes the definition of subsampling versions $B_1^\star(\Upsilon_1,\Upsilon_2), B_2^\star(\Upsilon_1,\Upsilon_2)$ and $B_3^\star(\Upsilon_1,\Upsilon_2)$ reasonable.

\item[3)]
These estimators can be used to replace the previous one on the standardized quadratic form and for the Pearson-approximation. So we get $W_N^B$ resp. $W_N^{B\star}$ and $\hat f_P^{B}$ resp. $\hat f_P^{B\star}(\Upsilon_1,\Upsilon_2)$.
\end{itemize}
\end{re}

\begin{Le}\label{estimators3}
The results of \Cref{Theorem4} and \Cref{estimators2} remains valid even without the conditions on the relations between sample sizes and number of groups,  if we replace the estimators therein with $B_1$, $B_2$ and $B_3$ resp. $B_1^\star(\Upsilon_1,\Upsilon_2), B_2^\star(\Upsilon_1,\Upsilon_2)$ and $B_3^\star(\Upsilon_1,\Upsilon_2)$. 
\end{Le}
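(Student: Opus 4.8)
\noindent\emph{Proof plan.}\quad The plan is to reduce the statement to what has already been established, by observing that the proofs of \Cref{Theorem4} and \Cref{estimators2} use the estimators $A_1,A_2,A_3$ (and their subsampling versions) only through three structural facts, and that the very same facts hold for $B_1,B_2,B_3$ \emph{without} the side conditions. Going back through those proofs, all that is exploited is: $A_1$ is unbiased for $\tr(\vT\vV_N\vT)=\E_{\mathcal H_0}(Q_N)$, $A_2$ for $\tr((\vT\vV_N\vT)^2)=\tfrac12\Var_{\mathcal H_0}(Q_N)$ and $A_3$ for $\tr((\vT\vV_N)^3)$; each of them is dimensional stable; and each is ratio-consistent for its target. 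The conditions $\prod_{i=1}^a\frac{(n_i-2)!(n_i-2)!}{n_i!(n_i-4)!}\to1$, $\prod_{i=1}^a\frac{(n_i-4)!(n_i-4)!}{n_i!(n_i-8)!}\to1$ and $\min(n_1,\dots,n_a)=\Lan(a^p)$ enter \Cref{estimators} and \Cref{estimators2} exclusively to secure ratio-consistency; hence it suffices to re-establish that property for the $B$-estimators unconditionally and then repeat the old arguments verbatim.

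The second step is to prove unbiasedness, dimensional stability and ratio-consistency for $B_1,B_2,B_3$ and for $B_1^\star(\Upsilon_1,\Upsilon_2),B_2^\star(\Upsilon_1,\Upsilon_2),B_3^\star(\Upsilon_1,\Upsilon_2)$ with no condition linking $n_{\min}$ to $a$; this is where the real work lies and it belongs to the appendix, where the $B_k$ are defined. Unbiasedness is a moment identity: like the $A_k$, the $B_k$ are averages of symmetrised quadratic and bilinear forms in the Gaussian difference vectors $\vZ_{(\vell_1,\vell_2)}$, for which $\vT\vZ_{(\vell_1,\vell_2)}\sim\mathcal{N}_D(\vnull_D,2\vT\vV_N\vT)$, so only the normalising constants need to be matched. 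Dimensional stability carries over because it rests on the same bilinear structure $\vZ^\top\vT\vZ$ that supplies it for the $A_k$. The genuinely new point is ratio-consistency: I would bound $\Var(B_k)$ via a Hoeffding decomposition and show that $\Var(B_k)$ divided by the squared target tends to $0$ in each of the frameworks \eqref{asframe1}--\eqref{asframe3}. The role of taking a common index scheme across groups together with the mixing step is precisely that the leading projection term then contains an effective number of summands tending to infinity without any prescribed relation between $n_{\min}$ and $a$; for the subsampling versions one additionally controls the conditional sampling variance, which vanishes as $\Upsilon_1,\Upsilon_2\to\infty$, exactly as for the $A_k^\star$.

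The final step reassembles the two assertions. For \Cref{Theorem4}: writing $W_N^{B}=(Q_N-B_1)\big/\sqrt{2B_2}$ and $\widetilde W_N=(Q_N-\E_{\mathcal H_0}(Q_N))\big/\sqrt{\Var_{\mathcal H_0}(Q_N)}$, unbiasedness and ratio-consistency of $B_1,B_2$ give $B_1/\E_{\mathcal H_0}(Q_N)\stackrel{\mathcal{P}}{\to}1$ and $B_2\big/\bigl(\tfrac12\Var_{\mathcal H_0}(Q_N)\bigr)\stackrel{\mathcal{P}}{\to}1$, and since $\widetilde W_N$ converges under $\mathcal H_0(\vT)$ by \Cref{Asymptotik}, the Slutsky argument from the proof of \Cref{Theorem4} transfers the limit to $W_N^{B}$; the same computation with the starred estimators gives it for $W_N^{B\star}$. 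For \Cref{estimators2}: since $f_P\ge1$ (power-mean inequality for the non-negative eigenvalues of $\vT\vV_N\vT$, using $\tr((\vT\vV_N)^k)=\tr((\vT\vV_N\vT)^k)$ for $k=2,3$), one has $1/f_P=\tr^2((\vT\vV_N)^3)\big/\tr^3((\vT\vV_N)^2)\in(0,1]$, so ratio-consistency of $B_2,B_3$ and the continuous mapping theorem yield $1/\hat f_P^{B}-1/f_P=B_3^2/B_2^3-\tr^2((\vT\vV_N)^3)/\tr^3((\vT\vV_N)^2)\stackrel{\mathcal{P}}{\to}0$, and likewise for $\hat f_P^{B\star}(\Upsilon_1,\Upsilon_2)$; part (c) then follows word for word from the proof of \Cref{estimators2}(c) and \Cref{Theorem5}. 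I expect the main obstacle to be the ratio-consistency estimate of the second step --- the variance bound for the mixed U-statistics $B_1,B_2,B_3$, valid uniformly over \eqref{asframe1}--\eqref{asframe3}; once that is in hand, everything else is a transcription of the $A$-based proofs.
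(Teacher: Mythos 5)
Your proposal is correct and follows essentially the same route as the paper: the paper's proof of this lemma is exactly your reduction, with the unconditional unbiasedness, dimensional stability and variance bounds for $B_1,B_2,B_3$ and their subsampling versions supplied by Lemmas~\ref{estimators4} and~\ref{estimators5} in the appendix, after which the arguments of \Cref{Theorem4} and \Cref{estimators2} are repeated verbatim. The only cosmetic difference is that you propose a Hoeffding decomposition for the variance of the mixed U-statistics, whereas the paper bounds the variance by directly counting overlapping index tuples over the common index set $\N_{n_{\min}}$, which yields the single factor $\Lan(n_{\min}^{-1})$ in place of the product over $a$ groups --- precisely the mechanism you identify.
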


Based on this Lemma further asymptotic tests can be defined through $\psi_z^{B\star }=\ind(W_N^{B\star }>z_{1-\alpha})$,   $\psi_{\chi}^{B\star}=\ind(W_N^{B \star}>\chi_{1;1-\alpha}^2)$ and $\varphi_{N}^{B \star}=\ind\{W_N>K_{\hat f_P^{B \star};1-\alpha}\}$.

\section{Simulation}\label{simulations}
In this section, we will focus on framework \eqref{asframe1}, since settings can soon become slightly unwieldy for an increasing number of groups.
We consider two main settings with two groups each time
\begin{itemize}
\item a semi-high-dimensional setting with $d_1=5$ and $d_2=D-d_1$,
\item a high dimensional setting with $d_1=0.2 D$ and $d_2=0.8 D$,
\end{itemize}
 while we have $D=(10,20,40,70,100,150,200,300,400,600,800,1200)$.

 This will be investigated for a small sample size($n_1=10$, $n_2=15$) and medium samples sizes($n_1=20$, $n_2=30$ and $n_1=50$, $n_2=75$) with $\alpha=5\%$.
Since a setting like this is not part of other existing works, no competitors exist to compare. 

The used hypothesis matrix depends on the considered scenario. In scenario $A)$ we have $\beta_1\to 0$  with a hypothesis matrix based on $\vH(A)$ given through
$\vH_{11}(A)=\vP_{d_1,d_1}$, $\vH_{12}(A)=-\vP_{d_1,d_2}$, $\vH_{21}(A)=-\vP_{d_1,d_2}$ and $\vH_{22}(A)=\vP_{d_2,d_2}$. In this scenario we use  two compound symmetry covariance matrices $\vSigma_1=\vI_{d_1} +\vJ_{d_1}/d_1$ and $\vSigma_2=\vI_{d_2} +\vJ_{d_2}/d_2$.
 Here $\vI_d$ denotes the $d\times d$-dimensional identity, while $\vJ_d$ denotes the $d\times d$-dimensional matrix only containing 1's.

For scenario $B)$ based on $\vH(B)$ with
$\vH_{11}(B)=\vJ_{d_1,d_1}$, $\vH_{12}(B)=-\vJ_{d_1,d_2}$, $\vH_{21}(B)=\vJ_{d_1,d_2}$ and $\vH_{22}(B)=\vJ_{d_2,d_2}$, we get $\beta_1 \to 1$.
In both cases we use the unique projection matrix $\vT=\vH^\top (\vH\vH^\top)^+\vH$ based on the Moore-Penrose-Inverse.
Here two different kinds of autoregressive covariance matrices are used with $(\vSigma_1)_{s,t}=0.6^{|s-t|}$ and $(\vSigma_2)_{s,t}=0.6^{|s-t|/(d_2-1)}$.\\\\
 Based on these hypothesis matrices, we consider the type-I error rate of our tests $\psi_z^{B \star}$,   $\psi_{\chi}^{B \star}$ and $\varphi_{N}^{B \star}$. We relinquish here also to constitute the rates of the corresponding estimators $\psi_z^{\star A}$,   $\psi_{\chi}^{\star A}$ and $\varphi_{N}^{\star A}$, since our simulations (see the appendix for more details) show similar results.
 We always use $\Upsilon_2=10$ and $\Upsilon_1$ depending on the estimator and the sample size for the subsampling estimators. For $B_1$ it is $\Upsilon_1=5\cdot N$, while for $B_2$ we chose $\Upsilon_1=10\cdot N$ and for $B_3$ finally $\Upsilon_1=100\cdot N$. This choice bears that the estimators have different orders of the kernel of the U-statistics and, therefore, significant differences in the number of possible index combinations. \\

\begin{figure}[htbp]
\setlength{\abovecaptionskip}{5pt} 
\setlength{\belowcaptionskip}{1pt} 
 
\includegraphics[clip,trim=1mm 1mm 1mm 3mm,  width=\textwidth]{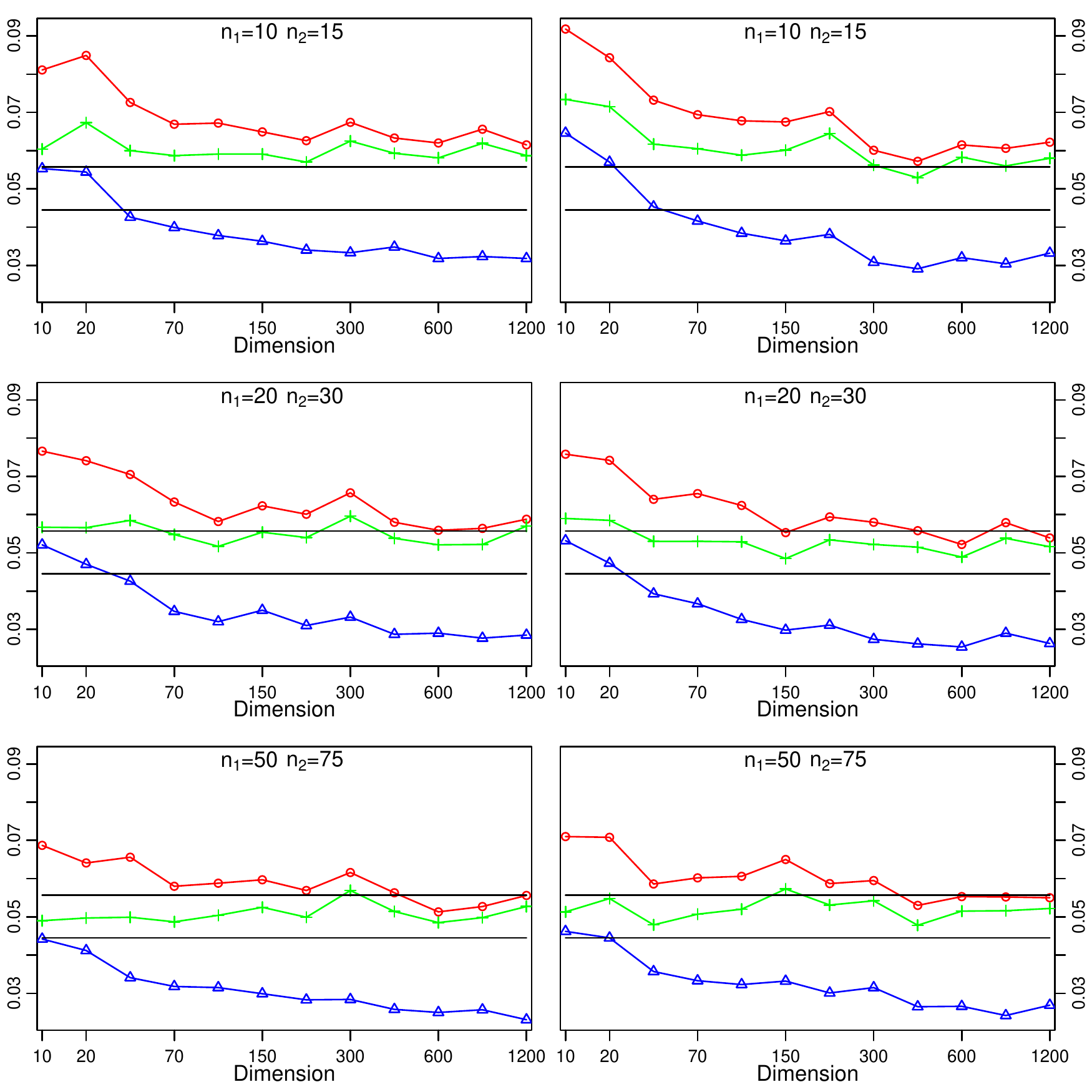} 

\caption{Simulated type I-error rates ($\alpha=5\%$) for the statistic $W_N^B$ compared with the critical values of a standard normal(\textcolor{red}{$\circ $}), standardized $\chi_1^2$(\textcolor{blue}{$\bigtriangleup$}) and $K_f$-distribution(\textcolor{green}{$+$}) under the null hypothesis $B)$ for increasing dimension $D$ with covariance matrices $(\vSigma_1)_{s,t}=0.6^{|s-t|}$ and $(\vSigma_2)_{s,t}=0.6^{|s-t|/(d_2-1)}$.
On the left side it is a semi-high-dimensional setting with  $d_1=5$ and $d_2=D-5$ while on the right side it holds $d_1=0.2D$ and $d_2=0.8D$.}
\label{BildSimulation1} 

\end{figure} 
In \Cref{BildSimulation1} we see that
  $\psi_{z}^{B \star}$ for all sample sizes gets closer to the $\alpha$ level with increasing dimension but needs $D\geq 300$ and $N\geq 125$ to be in the $99\%$ binomial interval, independent of the setting.
  Conversely, $\phi_{N}^{B \star}$ has already for $N>25$ a type-I-error rate which is almost entirely in the binomial interval even for very small dimension $D$. This shows again that the using $K_{\widehat f_P}$ leads to a better and faster approximation. A mistakenly usage of $\psi_{\chi}^{B \star}$ would lead to a way to conservative test for increasing dimension, with error rates about $3\%$.
In scenario A) our tests perform slightly better for the semi-high dimensional setting, especially for smaller dimensions, than for the pure high-dimensional setting. This is surprising since, for the semi-high-dimensional setting, the dimensions are more unbalanced, which could be expected to be more challenging.
\begin{figure}[htbp]
\setlength{\abovecaptionskip}{5pt} 
\setlength{\belowcaptionskip}{1pt} 
 
\includegraphics[clip,trim=1mm 1mm 1mm 3mm,  width=\textwidth]{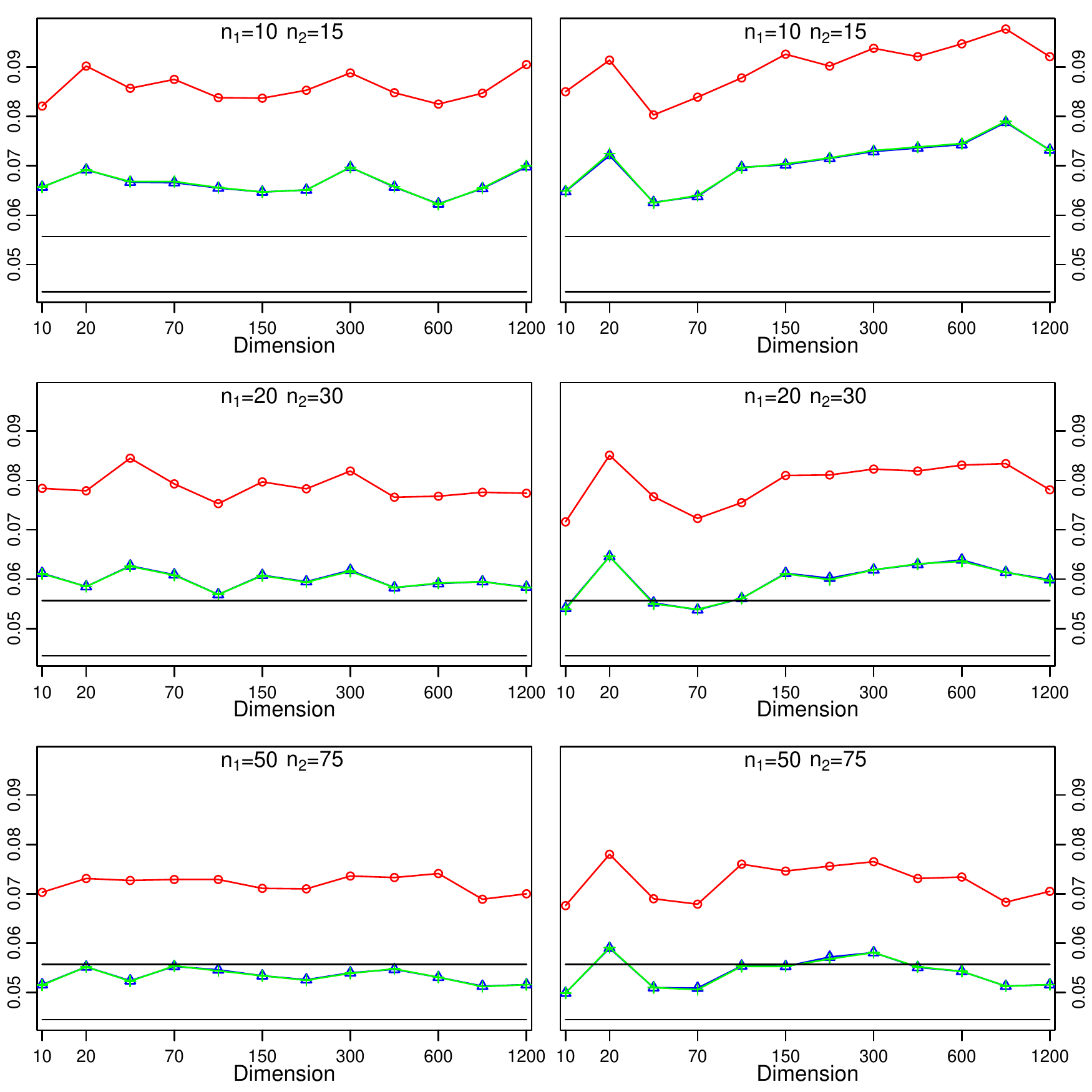} 

\caption{Simulated type I-error rates ($\alpha=5\%$) for the statistic $W_N^B$ compared with the critical values of a standard normal(\textcolor{red}{$\circ $}), standardized $\chi_1^2$(\textcolor{blue}{$\bigtriangleup$}) and $K_f$-distribution(\textcolor{green}{$+$}) under the null hypothesis $A)$ for increasing dimension $D$ with covariance matrices $\vSigma_1=\vP_{d_1}+\vJ_{d_1}/{d_1}$ and $\vSigma_2=\vP_{d_2}+\vJ_{d_2}/{d_2}$.
On the left side it is a semi-high-dimensional setting with  $d_1=5$ and $d_2=D-5$ while on the right side it holds $d_1=0.2D$ and $d_2=0.8D$.}
\label{BildSimulation2} 
\end{figure} 

For scenario B), we also see in \Cref{BildSimulation2} that the type-I-error rate is better in the semi-high-dimensional setting. Here a mistakenly usage of $\psi_{z}^{B \star}$ would lead to a liberal test with error rates between 0.065 and 0.1 . Because of the fast convergence of $f_P \to 1$, both tests based on the standardized $\chi^2$ distribution perform identical and need sample size $N>50$ to be continuously in the binomial interval. Moreover, for the high-dimensional setting, it is remarkable that the type-I error rate of $\psi_{z}^{B \star}$ and $\psi_{\chi}^{B \star}$ seems only to be shifted. For the semi-high-dimensional setting, the rates of both tests have a similar shape but also have apparent differences.\\

All in all,   $\phi_{N}^{B \star}$ showed good performance in both settings and for comparable small dimensions and sample sizes, while for larger sample sizes, the used trace estimators are closer to the unknown values. This can be assumed to be the main reason for the better performance for medium sample sizes.

\section{Conclusion}
The present paper introduces an approach for split-plot designs, where the single groups have different dimensions. These kinds of settings can occur for various reasons but were, so far, rarely considered in particular in the context of high-dimensional data. Therefore we expand the approach of \cite{sattler2018} for groups with different dimensions, which includes especially semi-high-dimensional settings.
With this, we derived the asymptotic distribution of our test statistic under different conditions as well as new and innovative estimators for the required traces. For practical usage of this approach, an alternative approach for determining critical values and different kinds of subsampling estimators were developed. An extensive simulation study was done to investigate the properties of this new approach in different settings. Here for different kinds of partially very challenging settings, the type-I-error rate as central property shows the good performance of our approach already for medium sample sizes. Here the different kinds of estimators performed comparably well, allowing us to choose them appropriately.

The next step in expanding this approach would be to allow different dimensions within the groups for future research. In this way, it would be possible to investigate high-dimensional clustered data sets. With this estimating the necessary traces without considerable model restrictions will be challenging.
Analogically, an extension of the presented approach for data with missing values would be desirable, but again need, new concepts to estimate the necessary traces containing the unknown covariance matrices.

Also, reformulating other central approaches regarding high-dimensional data like the work from \cite{chen2010} would be of great interest.

\section*{Acknowledgement}
 This work was supported by the German Research Foundation project DFG-PA2409/3-2.
\newpage
\section{Appendix}
\subsection{Further Simulations}
Since we introduced two different kinds of estimators, we want to investigate the influence of the chosen estimator on the type-I error rate. Thereto, we repeat the type-I error investigation from above now for $W_N^A$. In \Cref{BildSimulation3} and \Cref{BildSimulation4} we can see that the performance is very similar for both hypotheses and both settings

\begin{figure}[H]
\setlength{\abovecaptionskip}{5pt} 
\setlength{\belowcaptionskip}{1pt} 
 
\includegraphics[clip,trim=1mm 1mm 1mm 3mm,  width=\textwidth]{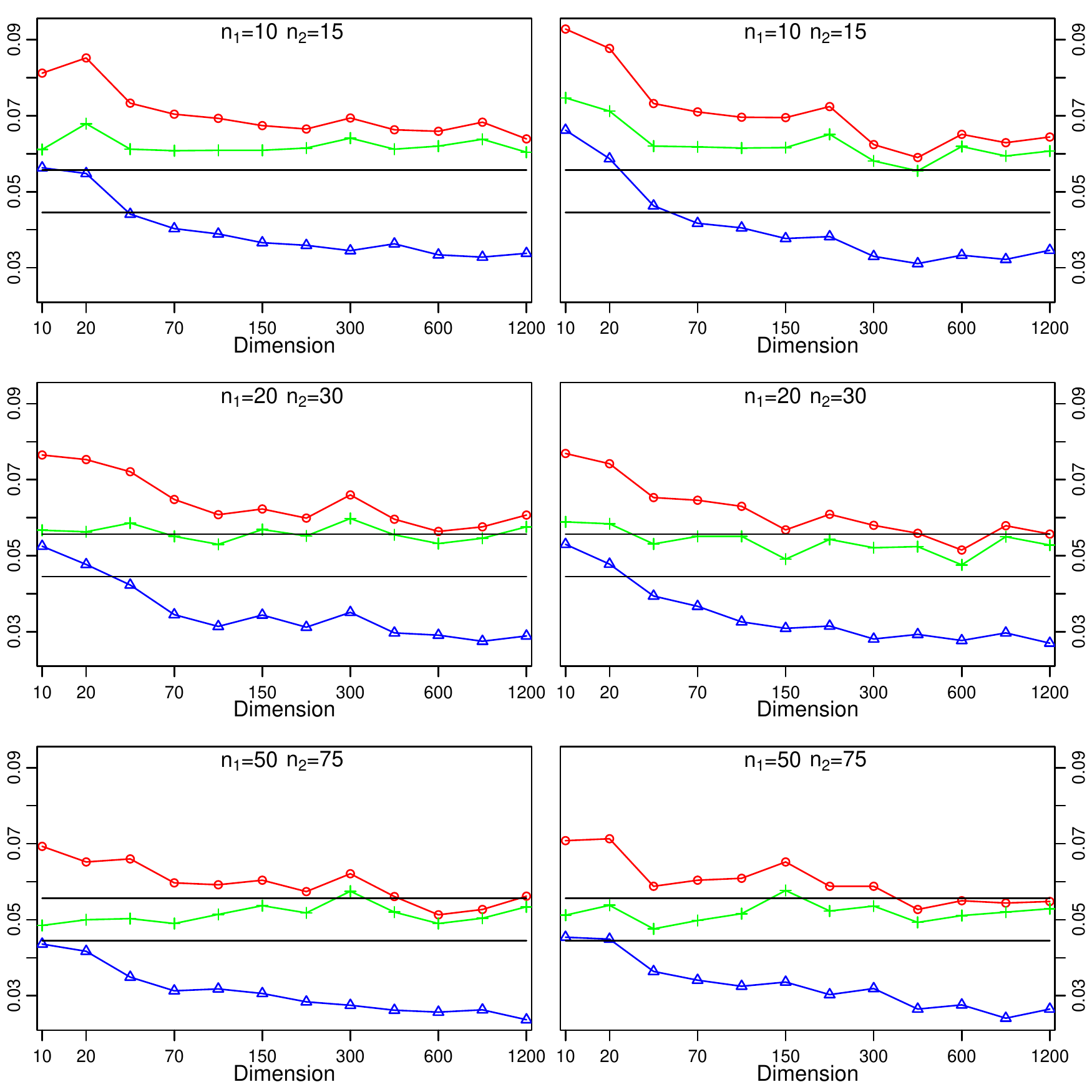} 

\caption{Simulated type I-error rates ($\alpha=5\%$) for the statistic $W_N^A$ compared with the critical values of a standard normal(\textcolor{red}{$\circ $}), standardized $\chi_1^2$(\textcolor{blue}{$\bigtriangleup$}) and $K_f$-distribution(\textcolor{green}{$+$}) under the null hypothesis $B)$ for increasing dimension $D$ with covariance matrices $(\vSigma_1)_{s,t}=0.6^{|s-t|}$ and $(\vSigma_2)_{s,t}=0.6^{|s-t|/(d_2-1)}$.
On the left side it is a semi-high-dimensional setting with  $d_1=5$ and $d_2=D-5$ while on the right side it holds $d_1=0.2D$ and $d_2=0.8D$.}
\label{BildSimulation3} 
\end{figure} 

\begin{figure}[H]
\setlength{\abovecaptionskip}{5pt} 
\setlength{\belowcaptionskip}{1pt} 
 
\includegraphics[clip,trim=1mm 1mm 1mm 3mm,  width=\textwidth]{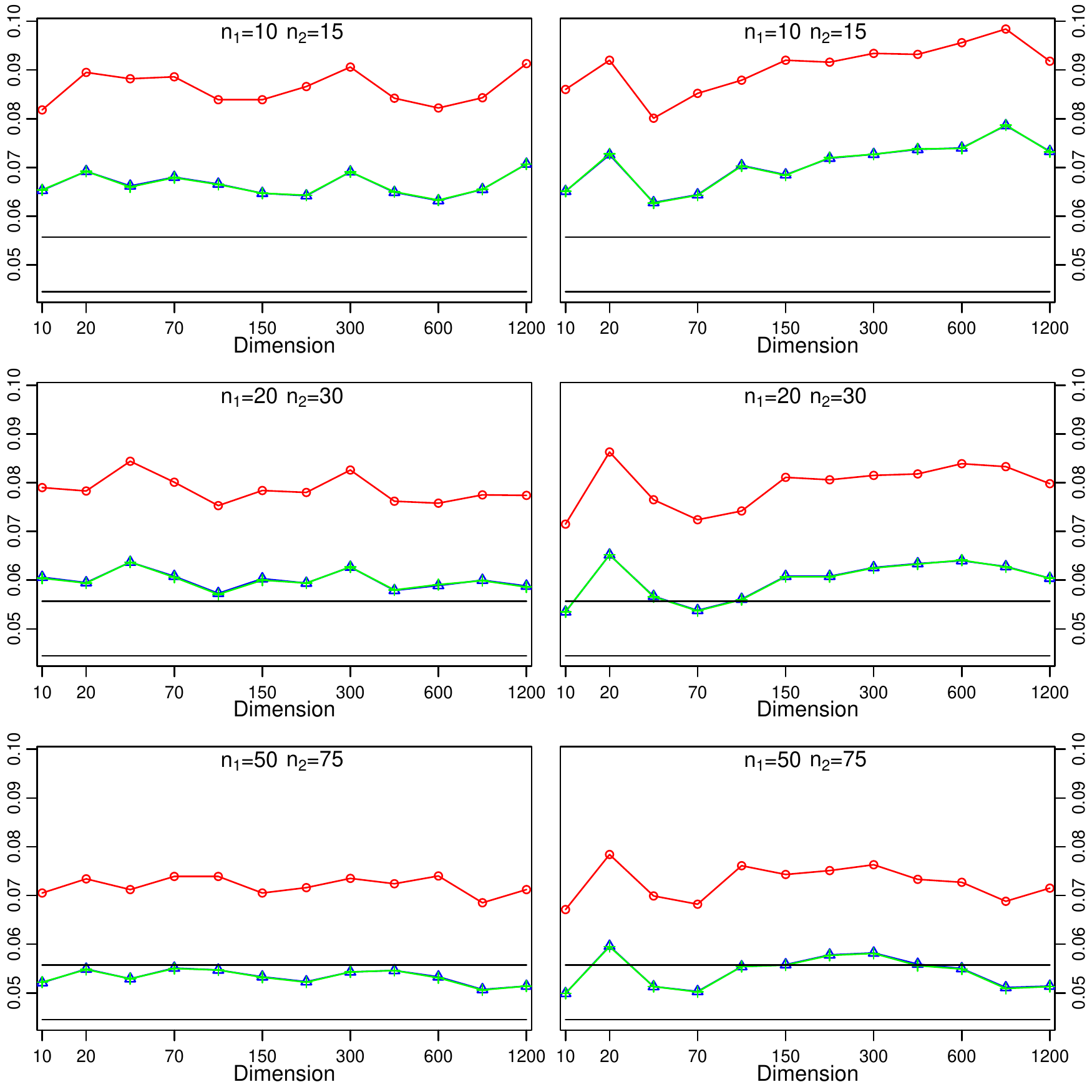} 

\caption{Simulated type I-error rates ($\alpha=5\%$) for the statistic $W_N^A$ compared with the critical values of a standard normal(\textcolor{red}{$\circ $}), standardized $\chi_1^2$(\textcolor{blue}{$\bigtriangleup$}) and $K_f$-distribution(\textcolor{green}{$+$}) under the null hypothesis $A)$ for increasing dimension $D$ with covariance matrices $\vSigma_1=\vP_{d_1}+\vJ_{d_1}/{d_1}$ and $\vSigma_2=\vP_{d_2}+\vJ_{d_2}/{d_2}$.
On the left side it is a semi-high-dimensional setting with  $d_1=5$ and $d_2=D-5$ while on the right side it holds $d_1=0.2D$ and $d_2=0.8D$.
} 
\label{BildSimulation4} 
\end{figure}

Moreover, if the estimators were implemented efficiently, the computation times of both kinds of estimators are comparable. So because of the necessary restrictions of  $A_1$, $A_2$ and $A_3$ it is recommendable to use $B_1$, $B_2$ and $B_3$ although their construction is more complicated.
 Even for the comparable high values of $\Upsilon_1$ it was up to 15 times faster. Together with the lesser requirements, therefore, these estimators are preferable, although they are more complicated.

\subsection{Proofs}
The new estimators $A_1$ and $A_2$ can be seen as a combination of the estimators $A_{i,1}$ and $A_{i,3}$ from \cite{sattler2018}  with the pooled vector $\vZ$.

\begin{proof}[Proof of Lemma \ref{estimators}]\phantom{1}\vspace*{-0.25 \baselineskip}

With the formulas for moments of the quadratic forms from \cite{sattler2018} and an adaptation of estimators therein, we calculate\\

$\begin{array}{ll}
\E\left({A_1}\right)&=\frac{1}{2\cdot \prod\limits_{i=1}^a\binom{  n_i }{2 }}\sum\limits_{\begin{footnotesize}\substack{\ell_{1,1}, \ell_{2,1}=1\\\ell_{1,1}<\ell_{2,1}}\end{footnotesize}}^{n_1}
\dots\sum\limits_{\begin{footnotesize}\substack{\ell_{1,a}, \ell_{2,a}=1\\\ell_{1,a}<\ell_{2,a}}\end{footnotesize}}^{n_a}
\E\left({\vZ_{(\vell_1,\vell_2)}}^\top \vT{\vZ_{(\vell_1,\vell_2)}}\right)
\\[3.2ex]
&=\frac{1}{2\cdot \prod\limits_{i=1}^a\binom{  n_i }{2} }\sum\limits_{\begin{footnotesize}\substack{\ell_{1,1}, \ell_{2,1}=1\\\ell_{1,1}< \ell_{2,1}}\end{footnotesize}}^{n_1}
\dots\sum\limits_{\begin{footnotesize}\substack{\ell_{1,a}, \ell_{2,a}=1\\\ell_{1,a}< \ell_{2,a}}\end{footnotesize}}^{n_a}
\tr\left(2\vT\vV_N\vT\right)\\
&=\tr\left(\vT\vV_N\vT\right)
\end{array}$\\\\
and\\

$\begin{array}{ll}
&\Var\left({A_1}\right)\\=&\sum\limits_{\begin{footnotesize}\substack{\ell_{1,1}, \ell_{2,1}=1\\\ell_{1,1}<\ell_{2,1}}\end{footnotesize}}^{n_1}
\dots\sum\limits_{\begin{footnotesize}\substack{\ell_{1,a}, \ell_{2,a}=1\\\ell_{1,a}<\ell_{2,a}}\end{footnotesize}}^{n_a}
\sum\limits_{\begin{footnotesize}\substack{\ell_{1,1}', \ell_{2,1}'=1\\\ell_{1,1}'<\ell_{2,1}'}\end{footnotesize}}^{n_1}
\dots\sum\limits_{\begin{footnotesize}\substack{\ell_{1,a}', \ell_{2,a}'=1\\\ell_{1,a}'< \ell_{2,a}'}\end{footnotesize}}^{n_a}
\frac{\Cov\left({\vZ_{(\vell_1,\vell_2)}}^\top \vT{\vZ_{(\vell_1,\vell_2)}},{\vZ_{(\vell_1',\vell_2')}}^\top \vT{\vZ_{(\vell_1',\vell_2')}}\right)}{\left(2\cdot \prod\limits_{i=1}^a\binom {  n_i }{2} \right)^2}
\\[2.5ex]
\leq& \frac{\prod\limits_{i=1}^a\binom {  n_i }{2}-\prod\limits_{i=1}^a\binom {  n_i-2 }{2}}
{2\cdot \prod\limits_{i=1}^a\binom {  n_i }{2} } \Var\left({\vZ_{(\vell_1,\vell_2)}}^\top \vT{\vZ_{(\vell_1,\vell_2)}}\right)
\\[2.2ex]
=&\frac{\prod\limits_{i=1}^a\binom {  n_i }{2}-\prod\limits_{i=1}^a\binom {  n_i-2 }{2}}
{2\cdot \prod\limits_{i=1}^a\binom {  n_i }{2} }  \cdot \Lan\left(\tr^2\left(\vT\vV_N\right)\right).
\end{array}$\\\\\\
Similar we get\\\\
$\begin{array}{ll}
\E\left({A_2}\right)&=\frac{1}{4\cdot \prod\limits_{i=1}^a 6\cdot\binom{  n_i }{4 }}\sum\limits_{\begin{footnotesize}\substack{\ell_{1,1}\neq \dots \neq  \ell_{4,1}=1\\\ell_{1,1}<\ell_{2,1},\dots,\ell_{3,1}< \ell_{4,1}}\end{footnotesize}}^{n_1}
\dots\sum\limits_{\begin{footnotesize}\substack{\ell_{1,a}\neq\dots\neq \ell_{4,a}=1\\\ell_{1,a}<\ell_{2,a},\dots,\ell_{3,a}< \ell_{4,a}}\end{footnotesize}}^{n_a}
\E\left(\left[{\vZ_{(\vell_1,\vell_2)}}^\top \vT{\vZ_{(\vell_3,\vell_4)}}\right]^2\right)
\\[3ex]
&=\frac{1}{4\cdot \prod\limits_{i=1}^a 6\cdot\binom{  n_i }{4 }}\sum\limits_{\begin{footnotesize}\substack{\ell_{1,1}\neq \dots \neq  \ell_{4,1}=1\\\ell_{1,1}<\ell_{2,1},\dots,\ell_{3,1}< \ell_{4,1}}\end{footnotesize}}^{n_1}
\dots\sum\limits_{\begin{footnotesize}\substack{\ell_{1,a}\neq\dots\neq \ell_{4,a}=1\\\ell_{1,a}<\ell_{2,a},\dots,\ell_{3,a}< \ell_{4,a}}\end{footnotesize}}^{n_a}
\tr\left(\left(2\vT\vV_N\vT\right)^2\right)\\
&=\tr\left(\left(\vT\vV_N\vT\right)^2\right)
\end{array}$\\\\ and\\

$\begin{array}{ll}
&\Var\left({A_2}\right)\\=&\hspace*{-0.3cm}\sum\limits_{\begin{footnotesize}\substack{\ell_{1,1}\neq \dots \neq  \ell_{4,1}=1\\\ell_{1,1}<\ell_{2,1},\ell_{3,1}< \ell_{4,1}}\end{footnotesize}}^{n_1}
\hspace*{-0.2cm}\dots\hspace*{-0.2cm}\sum\limits_{\begin{footnotesize}\substack{\ell_{1,a}\neq\dots\neq \ell_{4,a}=1\\\ell_{1,a}<\ell_{2,a},\ell_{3,a}< \ell_{4,a}}\end{footnotesize}}^{n_a}
\sum\limits_{\begin{footnotesize}\substack{\ell_{1,1}'\neq \dots \neq  \ell_{4,1}'=1\\\ell_{1,1}'<\ell_{2,1}',\ell_{3,1}'< \ell_{4,1}'}\end{footnotesize}}^{n_1}
\hspace*{-0.2cm}\dots\hspace*{-0.1cm}\sum\limits_{\begin{footnotesize}\substack{\ell_{1,a}'\neq\dots\neq \ell_{4,a}'=1\\\ell_{1,a}'<\ell_{2,a}',\ell_{3,a}'< \ell_{4,a}'}\end{footnotesize}}^{n_a}\hspace*{-0.3cm}
\frac{\Cov\left(\left[{\vZ_{(\vell_1,\vell_2)}}^\top \vT{\vZ_{(\vell_3,\vell_4)}}\right]^2,\left[{\vZ_{(\vell_1',\vell_2')}}^\top \vT{\vZ_{(\vell_3',\vell_4')}}\right]^2\right)}{\left(4\cdot \prod\limits_{i=1}^a 6 \Cdot\binom {  n_i!}{4} \right)^2}
\\[2.5ex]
\leq & \frac{\prod\limits_{i=1}^a 6\cdot \binom {  n_i }{4}-\prod\limits_{i=1}^a 6 \cdot \binom {   n_i-4 }{4}}
{4\cdot \prod\limits_{i=1}^a6\Cdot \binom {  n_i }{4} } \Var\left(\left[{\vZ_{(\vell_1,\vell_2)}}^\top \vT{\vZ_{(\vell_3,\vell_4)}}\right]^2\right)
\\[2.5ex]
=& \frac{\prod\limits_{i=1}^a 6\cdot \binom {  n_i }{4}-\prod\limits_{i=1}^a 6 \cdot \binom {   n_i-4 }{4}}
{4\cdot \prod\limits_{i=1}^a6\Cdot \binom {  n_i }{4} }  \cdot \Lan\left(\tr^2\left(\left(\vT\vV_N\right)^2\right)\right).
\end{array}$\\\\
Therefore, with the condition on their respective product, both estimators are unbiased, dimensional stable and ratio-consistent.
\end{proof}

Instead of using each possible index combination a subsampling version is based on a random subsample. Thereto, for each $i=1,\dots,a$ and $\upsilon=1,\dots,\Upsilon$  independently drawn random subsamples $\{\sigma_{1i}(\upsilon),\dots,\sigma_{4i}(\upsilon)\}$ of length $4$ from $\{1,\dots,n_i\}$ are merged in a joint random vector $\vsigma(\upsilon,4) =(\vsigma_1(\upsilon,4),...,\vsigma_1(\upsilon,4))= (\sigma_{11}(\upsilon),\dots,\sigma_{1a}(\upsilon),\dots,\sigma_{4a}(\upsilon))$. Similar we can define $\vsigma(\upsilon,2)$ and $\vsigma(\upsilon,6)$.

With these vectors, the subsampling version of our estimator for $\Upsilon$ repetitions is given through

\[A_1^\star(\Upsilon):=\frac{1}{2\Upsilon}\sum\limits_{\upsilon=1}^\Upsilon {\vZ_{(\vsigma_1(\upsilon,2),\vsigma_2(\upsilon,2))}}^\top \vT{\vZ_{(\vsigma_1(\upsilon,2),\vsigma_2(\upsilon,2))}}\]
and 
\[A_2^\star(\Upsilon):=\frac{1}{4\Upsilon}\sum\limits_{\upsilon=1}^\Upsilon  \left[{\vZ_{(\vsigma_1(\upsilon,2),\vsigma_2(\upsilon,2))}}^\top \vT{\vZ_{(\vsigma_3(\upsilon,2),\vsigma_4(\upsilon,2))}}\right]^2.\]

\begin{Le}\label{estimatorsub}
In each of the frameworks  \eqref{asframe1}-\eqref{asframe3}
\begin{itemize}
\item[a)]$A_1^\star(\Upsilon)$ is an unbiased estimator for $\tr(\vT\vV_N\vT)$. If $\prod_{i=1}^a \frac{(n_i-2)!\cdot (n_i-2)!}{n_i!(n_i-4)!}\to 1$ holds, it is additional dimensional stable and ratio-consistent, for $\Upsilon\to \infty$.
\item[b)]
$A_2^\star(\Upsilon)$ is an unbiased estimator for $\tr\left((\vT\vV_N\vT)^2\right)$. If $\prod_{i=1}^a \frac{(n_i-4)!\cdot (n_i-4)!}{n_i!(n_i-8)!}\to 1$ holds, it is additional dimensional stable and ratio-consistent, for $\Upsilon\to \infty$.
\end{itemize}
\end{Le}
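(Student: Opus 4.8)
The plan is to condition on the full sample $\mathcal{X}=(\vX_{i,j})_{i\le a,\,j\le n_i}$ and thereby split the randomness in two. Writing $h_1(\upsilon)$ and $h_2(\upsilon)$ for the two normalised kernels appearing in the definitions of $A_1^\star(\Upsilon)$ and $A_2^\star(\Upsilon)$, we have $A_k^\star(\Upsilon)=\Upsilon^{-1}\sum_{\upsilon=1}^{\Upsilon}h_k(\upsilon)$, and, given $\mathcal{X}$, the drawn index configurations $\vsigma(1,\cdot),\dots,\vsigma(\Upsilon,\cdot)$ are i.i.d.\ and uniform over the admissible configurations. The law of total variance then reads $\Var\big(A_k^\star(\Upsilon)\big)=\Var\big(\E[A_k^\star(\Upsilon)\mid\mathcal{X}]\big)+\E\big[\Var(A_k^\star(\Upsilon)\mid\mathcal{X})\big]$, where the first summand does not involve $\Upsilon$ and the second is of order $1/\Upsilon$; this reduces the proof to (i) the behaviour of a U-statistic built from the same kernel and (ii) a cheap conditional-variance bound.

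For unbiasedness, $\E[A_k^\star(\Upsilon)\mid\mathcal{X}]$ is the average of the kernel $h_k$ over all admissible configurations, i.e.\ (after fixing the inessential labelling inside a drawn subsample) a U-statistic $\widetilde{A}_k$ with the same kernel as $A_k$; for $k=1$ one recovers $A_1$ itself up to that labelling. Using $\vT\vZ_{(\vell_1,\vell_2)}\sim\mathcal{N}_D(\vnull_D,2\vT\vV_N\vT)$, and for the second kernel the independence of $\vZ_{(\vell_1,\vell_2)}$ and $\vZ_{(\vell_3,\vell_4)}$ (within each group they are differences of four distinct, hence independent, observations), the kernels have expectation $\tr(2\vT\vV_N\vT)$ resp.\ $\tr((2\vT\vV_N\vT)^2)$, so $\E[\widetilde{A}_1]=\tr(\vT\vV_N\vT)$ and $\E[\widetilde{A}_2]=\tr((\vT\vV_N\vT)^2)$; this is precisely the computation already done in the proof of Lemma~\ref{estimators}, and no restriction on the $n_i$ is needed for it. Hence $\E[A_k^\star(\Upsilon)]$ equals the respective target trace.

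For ratio-consistency, the first summand of the total variance, divided by the squared target trace, tends to $0$ by (the proof of) Lemma~\ref{estimators}: the combinatorial overlap count for $\widetilde{A}_k$ is the same as for $A_k$, and this is exactly where the products $\prod_i\tfrac{(n_i-2)!(n_i-2)!}{n_i!(n_i-4)!}\to1$ ($k=1$) and $\prod_i\tfrac{(n_i-4)!(n_i-4)!}{n_i!(n_i-8)!}\to1$ ($k=2$) are used. For the second summand, conditionally on $\mathcal{X}$ one averages $\Upsilon$ i.i.d.\ terms, so $\E[\Var(A_k^\star(\Upsilon)\mid\mathcal{X})]=\Upsilon^{-1}\E[\Var(h_k(1)\mid\mathcal{X})]\le\Upsilon^{-1}\E[h_k(1)^2]$, and $\E[h_k(1)^2]$ is a plain Gaussian moment of the kernel at one configuration: the quadratic-form moment formulas give $\E[h_1(1)^2]=\tr^2(\vT\vV_N)+2\tr((\vT\vV_N)^2)=\mathcal{O}\big(\tr^2(\vT\vV_N)\big)$ and, analogously, $\E[h_2(1)^2]=\mathcal{O}\big(\tr^2((\vT\vV_N)^2)\big)$. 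Since $\tr(\vT\vV_N\vT)=\tr(\vT\vV_N)$ and $\tr((\vT\vV_N\vT)^2)=\tr((\vT\vV_N)^2)$, division by $(\E A_k^\star(\Upsilon))^2$ makes this term $\mathcal{O}(1/\Upsilon)$. Letting first $N\to\infty$ and then $\Upsilon\to\infty$ sends the total relative variance to $0$; with unbiasedness and Chebyshev this yields $A_k^\star(\Upsilon)/\E[A_k^\star(\Upsilon)]\stackrel{\mathcal{P}}{\to}1$, and dimensional stability follows from the same estimates, inherited from those for $A_1,A_2$.

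The main obstacle will be the second-moment bound for the $A_2^\star$-kernel, namely controlling the eighth-order Gaussian moment $\E\big[(\vZ_{(\vell_1,\vell_2)}^\top\vT\vZ_{(\vell_3,\vell_4)})^4\big]$. I would condition on $\vZ_{(\vell_3,\vell_4)}$, which leaves $3\,\E\big[(2\,\vZ_{(\vell_3,\vell_4)}^\top\vT\vV_N\vT\vZ_{(\vell_3,\vell_4)})^2\big]$, the second moment of a Gaussian quadratic form with matrix $\vT\vV_N\vT\succeq0$; the one nontrivial step is the inequality $\tr\big((\vV_N\vT\vV_N\vT)^2\big)\le\big(\tr(\vV_N\vT\vV_N\vT)\big)^2=\tr^2((\vT\vV_N)^2)$, which holds because $\vV_N\vT\vV_N\vT$ has the same nonnegative eigenvalues as the positive semidefinite $\vV_N^{1/2}\vT\vV_N\vT\vV_N^{1/2}$. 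This is structurally parallel to, but lighter than, the variance bookkeeping already carried out for $A_1$ and $A_2$, so no new difficulty of principle is expected; the only other point to keep straight is the inessential symmetrisation in the unbiasedness step, which does not affect any identity since $\vV_N$ is block-diagonal and all the relevant moments are invariant under the labelling of $\vell_1\leftrightarrow\vell_2$ and $(\vell_1,\vell_2)\leftrightarrow(\vell_3,\vell_4)$.
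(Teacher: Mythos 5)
Your argument is correct, but it runs the law of total variance in the opposite direction from the paper. The paper conditions on the drawn index configurations, i.e.\ on $\F(\vsigma(\Upsilon,2))$ resp.\ $\F(\vsigma(\Upsilon,4))$: the conditional expectation is then the deterministic trace, so the first summand vanishes identically, and the entire variance bound comes from $\E\left[\Var\left(A_k^\star(\Upsilon)\mid\F(\vsigma(\Upsilon,\cdot))\right)\right]$, controlled by the expected number of pairs of draws whose index sets collide; this produces the single closed-form bound $\left(1-\left(1-\Upsilon^{-1}\right)\prod_{i=1}^a\binom{n_i-2}{2}/\binom{n_i}{2}\right)\cdot\tr^2(\vT\vV_N)$ (and its analogue for $A_2^\star$), in which both hypotheses appear at once. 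You condition on the data instead, so the conditional mean is the symmetrized full U-statistic, whose variance is exactly what the proof of Lemma~\ref{estimators} bounds --- that is where the product condition enters for you --- while the conditional variance is $\Upsilon^{-1}$ times a single-kernel second moment by conditional i.i.d.-ness, which is where $\Upsilon\to\infty$ enters. Your route buys a clean attribution of each hypothesis to one error term and reuses Lemma~\ref{estimators} verbatim instead of rederiving a collision count; the paper's route sidesteps the symmetrization caveat (which you correctly flag: per-group swaps $\ell_{1,i}\leftrightarrow\ell_{2,i}$ flip signs of blocks of $\vZ$, so $\E[A_1^\star(\Upsilon)\mid\mathcal{X}]$ is only the symmetrized version of $A_1$, though Jensen preserves the variance bound) and gives a marginally more explicit constant. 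Both arguments ultimately need the same Gaussian moment bounds for the kernels, including the eighth-order moment for the $A_2^\star$ kernel that you sketch via conditioning and $\tr\left((\vV_N\vT\vV_N\vT)^2\right)\le\tr^2\left((\vT\vV_N)^2\right)$; the paper imports these from the earlier reference. One phrasing nit: $N\to\infty$ and $\Upsilon\to\infty$ should be taken jointly rather than iteratively, but since your bound is a sum of two terms each tending to zero under its own condition, this is immaterial.
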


\begin{proof}

For the subsampling version, we take the same steps as for the comparable estimators from \cite{sattler2018} and use some results shown therein.
Denote with $\F(\sigma_i(\Upsilon,m))$  the smallest $\sigma$-field, which contains $\sigma_i(\upsilon, m)$  $\forall \upsilon\in \Upsilon$.
Then we get

$\begin{array}{ll}
\E\left({A_1^\star}(\Upsilon)\right)

&=\frac{1}{2\Upsilon}\sum\limits_{\upsilon=1}^\Upsilon\E\left(  {\vZ_{(\vsigma_1(\upsilon,2),\vsigma_2(\upsilon,2))}}^\top \vT{\vZ_{(\vsigma_1(\upsilon,2),\vsigma_2(\upsilon,2))}}\right)
\\[1.8ex]
&=\frac{1}{2\Upsilon}\sum\limits_{\upsilon=1}^\Upsilon\E\left(\Lambda_{4}(\ell_{1,1},\dots,\ell_{2,a})\right)
\\[1.8ex]

&{=}\frac{1}{2\Upsilon}\sum\limits_{\upsilon=1}^\Upsilon
\tr\left(2\vT\vV_N\right)=\tr\left(\vT\vV_N\right)
\end{array}$\\\\
and \\\\
$\begin{array}{l}
\Var\left(\E\left({A_1^\star}(\Upsilon)|\F(\vsigma(\Upsilon,2))\right)\right)
=\Var\left(\tr\left(\left(\vT\vV_N\right)\right)\right)=0.
\end{array}$\\\\\\
With this and $M(\Upsilon,\vsigma(\upsilon,2))$ as the notation  of  the amount of pairs $(k,\ell)\in \N_\Upsilon\times \N_\Upsilon$, which fulfill  $\vsigma(k,2)$ and $\vsigma(\ell,2)$ have totally different elements, we get

$\begin{array}{ll}
&\Var\left({A_1^\star}(\Upsilon)\right)
\\[1.0ex]
=&0+\E\left(\Var\left({A_1^\star}(\Upsilon)|\F(\vsigma(\Upsilon,2))\right)\right)
\\[1.0ex]
{\leq}&\frac{1}{4 \Upsilon^2}\E\left(\sum\limits_{(j,\ell)\in \N_\Upsilon\times \N_\Upsilon \setminus M(\Upsilon,\vsigma(\upsilon,2))} \Var\left( {\vZ_{(\vsigma_1(j,2),\vsigma_2(j,2))}}^\top \vT{\vZ_{(\vsigma_1(j,2),\vsigma_2(j,2))}}|\F(\vsigma(\Upsilon,2))\right)\right)
\\[2.4ex]
=&\frac{\E\left(|\N_\Upsilon\times \N_\Upsilon \setminus M(\Upsilon,\vsigma(\upsilon,2))|\right)}{\Upsilon^2}\cdot \frac{\Var\left({\vZ_{(\vell_1,\vell_2)}}^\top \vT\vZ_{(\vell_1,\vell_2)}\right)}{4}
\\[1.0ex]
{\leq}&
\left(1-\left(1-\frac{1}{\Upsilon}\right)\cdot \prod\limits_{i=1}^a\frac{ \binom{n_i-2} {2}}{\binom{n_i} {2}}\right)\cdot \tr^2\left(\left(\vT\vV_N\right)\right).
\end{array}$\\\\\\
Hereby, the upper bond for  $\E\left(|\N_\Upsilon\times \N_\Upsilon \setminus M(\Upsilon,\vsigma(\upsilon,2))|\right)$ was proven in \cite{sattler2018}.
The same calculations  with $M(\Upsilon,\vsigma(\upsilon,4))$ for our other estimator lead to

$\begin{array}{ll}
\E\left({A_2^\star}(\Upsilon)\right)

&=\frac{1}{4\Upsilon}\sum\limits_{\upsilon=1}^\Upsilon\E\left(\left[{\vZ_{(\vsigma_1(\upsilon,2),\vsigma_2(\upsilon,2))}}^\top \vT{\vZ_{(\vsigma_3(\upsilon,2),\vsigma_4(\upsilon,2))}}\right]^2 \right) 
\\[1.8ex]
&=\frac{1}{4\Upsilon}\sum\limits_{\upsilon=1}^\Upsilon\E\left({\vZ_{(\vell_1,\vell_2)}}^\top \vT\vZ_{(\vell_3,\vell_4)}\right).
\\[1.8ex]

&{=}\frac{1}{4\Upsilon}\sum\limits_{\upsilon=1}^\Upsilon
\tr\left(\left(2\vT\vV_N\right)^2\right)=\tr\left(\left(\vT\vV_N\right)^2\right)
\end{array}$\\\\\\
as well as\\\\
$\begin{array}{l}
\Var\left(\E\left({A_2^\star}(\Upsilon)|\F(\vsigma(\Upsilon,4))\right)\right)
=\Var\left(\tr\left(\left(\vT\vV_N\right)^2\right)\right)=0
\end{array}$\\\\\\ and finally to\\\\
$\begin{array}{ll}
&\Var\left({A_2^\star}(\Upsilon)\right)
\\[1.0ex]
=&0+\E\left(\Var\left({A_2^\star}(\Upsilon)|\F(\vsigma(\Upsilon,4))\right)\right)
\\[1.0ex]&
{\leq}\frac{1}{16 \Upsilon^2}\E\left(\sum\limits_{(j,\ell)\in \N_\Upsilon\times \N_\Upsilon \setminus M(\Upsilon,\vsigma(b,4))} \Var\left(\left[{\vZ_{(\vsigma_1(\upsilon,2),\vsigma_2(\upsilon,2))}}^\top \vT{\vZ_{(\vsigma_3(\upsilon,2),\vsigma_4(\upsilon,2))}}\right]^2\right)\right)
\\[2.4ex]
=&\frac{\E\left(|\N_\Upsilon\times \N_\Upsilon \setminus M(\Upsilon,\vsigma(\upsilon,4))|\right)}{\Upsilon^2}\cdot \frac{\Var\left(\left[{\vZ_{(\vell_1,\vell_2)}}^\top \vT\vZ_{(\vell_3,\vell_4)}\right]^2\right)}{16}
\\[1.0ex]
{\leq}&
\left(1-\left(1-\frac{1}{\Upsilon}\right)\cdot \prod\limits_{i=1}^a\frac{ \binom{n_i-4} {4}}{\binom{n_i} {4}}\right)\cdot  \Lan\left(\tr^2\left(\left(\vT\vV_N\right)^2\right)\right).
\end{array}$\\\\

These results show that the estimators are unbiased and dimensional-stable if 
\[\left(1-\left(1-\frac{1}{\Upsilon}\right)\cdot \prod\limits_{i=1}^a\frac{ \binom{n_i-2} {2}}{\binom{n_i} {2}}\right)\ \text{ resp. }\ \left(1-\left(1-\frac{1}{\Upsilon}\right)\cdot \prod\limits_{i=1}^a\frac{ \binom{n_i-4} {4}}{\binom{n_i} {4}}\right)\]\\
goes asymptotically to zero. Hence, it is necessary that $\Upsilon\to \infty$ as well as the second part of the respective product goes to 1. While the first point is easy to fullfil, the second one is done through requirements on the relation between samples sizes and number of groups.

\end{proof}

Reasonable choices for $\Upsilon$ are functions applied on $N$, in particular first-grade polynomials. Moreover, it seems obvious that for $B_2$, a higher value $\Upsilon$ should be used than for $B_1$. This would take into account that the respective amount of possible index combinations is essentially greater for $B_2$.

\begin{proof}[Proof of Theorem \ref{Theorem4}]

Based on the ratio-consistency of the estimators, we calculate for the standardized quadratic form\\

$\begin{array}{ll}
W_N&
=\left(\frac{Q_N-\tr\left(\vT\vV_N\right)}
{\sqrt{2\tr\left(\left(\vT\vV_N\right)^2\right)}}-\frac{A_{1}-\tr\left(\vT\vV_N\right)}
{\sqrt{2\tr\left(\left(\vT\vV_N\right)^2\right)}}\right)\cdot
\sqrt{\frac{\tr\left(\left(\vT\vV_N\right)^2\right)}{A_2}}
\\[2.5ex]
&=\left(\frac{Q_N-\tr\left(\vT\vV_N\right)}
{\sqrt{2\tr\left(\left(\vT\vV_N\right)^2\right)}}-\lan_P(1)\right)\cdot
(1+\lan_P(1))
\\[2.5 ex]
&=\widetilde W_N+\widetilde W_N\cdot \lan_P(1)-\lan_P(1)-\lan_P(1)\cdot \lan_P(1).
\end{array}$\\\\

If one of the conditions of \Cref{Asymptotik} is fulfilled, with Slutzky $\widetilde W_N\cdot \lan_P(1)$ converges in distribution to zero. Thereby, since the last two parts converge in probability to zero, and therefore also in distribution, with Slutzky's theorem, the asymptotical distributions of $\widetilde W_N$ and $ W_N$ coincides.\\\\

Based on \Cref{estimatorsub} the subsampling estimators $A_1^\star(\Upsilon)$ and $A_2^\star(\Upsilon)$ have the same properties like $A_1$ and $A_2$. Therefore the result for $W_N^\star$ follows analogously.

\end{proof}
\begin{proof}[Proof of Theorem \ref{Theorem5}]
The theorem is an extension of Theorem 4.1 from \cite{sattler2018}. Since they never used $\vT\vmu=\vnull$ for their proof, it remains to prove the "if and only if" part.
\begin{itemize}
\item[a)]

Since $V_N$ and all eigenvalues depend on $N$, we will denote this in this proof by $\beta_1(N)$ and similar $f_P(N)$. 
Assume $\beta_1(N)\nrightarrow 0$. From the Bolzano-Weierstrass theorem we know that there exists a subsequence $N'$ with $1/f_P(N')\to \gamma$ and $\beta_1(N')\to b_1\neq 0$.
With Lemma A.8 from \cite{sattler2018} it follows that $\gamma\neq 0$ and therefore $f_P(N')\to 1/\gamma\in [1,\infty)$.
Then we get for $Z\sim\mathcal{N}(0,1)$
\[\begin{array}{ll}
&K_{f_P(N')}\stackrel{\mathcal D}{\to} Z\\[1.2ex]
\Leftrightarrow& \frac{\chi_{1/\gamma}^2- 1/\gamma}{\sqrt{2/\gamma}} \stackrel{\mathcal D}{=} Z\\[1.2ex]
\Leftrightarrow& \chi_{1/\gamma}^2 \stackrel{\mathcal D}{=} {\sqrt{2/\gamma}}\cdot Z+1/\gamma.

\end{array}\]

Now on the right side it is a normal distribution, while on the left side it is a $\chi^2$- distribution with $\gamma^{-1}$ degrees of freedom. Therefore with $C_1,C_2\stackrel{i.i.d.}{\sim}\Gamma(\gamma/4,2)$ we can rewrite it through

\[ C_1+C_2\stackrel{\mathcal D}{=} {\sqrt{2/\gamma}}Z+1/\gamma.\]
With Cram{\'e}r's theorem from \cite{cramer} this is a contradiction, and therefore no $b_1\neq 0$ can exist. So all convergent subsequences of $\beta_1(N)$ goes to $0$, and therefore it holds $\beta_1\to 0$.

\item[b)]
 Again with the Bolzano-Weierstrass theorem we know the existence of at least one convergent subsequence for $1/f_P$ and $\beta_1$. So for an arbitrary of these  subsequences $N'$ we know $1/f_P(N')\to \gamma\in(0,1]$ and $\beta_1(N')\to b_1\in(0,1]$. From the rules for characteristic functions we get
 \[\begin{array}{ll}
 &K_{f_P(N')}\stackrel{\mathcal D}{=}  (\chi_1^2-1)/\sqrt{2}\\[1.2ex]
 \Leftrightarrow &|\varphi_{K_{f_P(N')}}(s)|\to \frac{1}{1+2s^2}\\[1.2ex]
\Leftrightarrow  &\frac{f_P(N')^3}{f_P(N')^3+2s^2}\to \frac{1}{1+2s^2}.

\end{array}\]

With $f_P(N')\to 1/\gamma$  and $s=1$ this means,
\[\frac{1/\gamma^3}{1/\gamma^3+2}=\frac{1}{3},\]
which only solution is $\gamma=1$ and therefore $b_1=1$. So all convergent subsequences have the same limit and therefore we knew $\beta_1\to 1$.
\end{itemize}
\end{proof}

\begin{proof}[Proof of Lemma \ref{estimators2}]

This estimator $A_3$ was already used in \cite{sattler2018} and therein
\[\begin{array}{l}
 \E\left(A_{3}\right)=\tr\left(\left(\vT\vV_N\right)^3\right),
 \\[2ex]
\Var\left(A_{3}\right)\leq\frac{\left(\prod\limits_{i=1}^a {n_i\choose 6}-\prod\limits_{i=1}^a\binom {n_i-6} {6}\right)}{ \prod\limits_{i=1}^a\binom {n_i} {6}}\cdot 27\tr^3\left(\left( \vT\vV_N\right)^2\right)
.\end{array}\]

was proven, as well as
 \[\begin{array}{l}
 \E\left({A_3^\star}(\Upsilon)\right)=\tr\left(\left(\vT\vV_N\right)^3\right),
 \\[2ex]
\Var\left({A_3^\star}(\Upsilon)\right)\leq\left(1-\left(1-\frac{1}{\Upsilon}\right)\cdot\prod\limits_{i=1}^a\frac{ \binom{n_i-6} {6}}{\binom{n_i} {6}}\right)\cdot 27\tr^3\left(\left(\vT\vV_N\right)^2\right)
.\end{array}\] 
\end{proof}

The relations between sample size and the number of groups necessary for using $A_1, A_2$ and $A_3$ and their subsampling version all have the same root. The definition of the random vectors $\vZ$ allows different indices for each group. This leads to a huge number of possible combinations and finally to the products required to converge to one. In case of a balanced setting, it would be easy to use the same indices for all groups and solve this difficulty. But in case of unbalanced sample sizes it would be only possible to chose indices from $\N_{n_{\min}}$, with $n_{\min}:=\min(n_1,...,n_a)$. Thereby many observations from larger groups would never be taken into account. To avoid this, the observations of each group are permutated multiple times. For each permutation, the quadratic forms were summed up, and after enough permutations, the estimator is divided by the number of summations. Another random vector must be introduced first to make this approach formal correct. The random vector $\pi_{j,i}$ represents a random permutation of the numbers $1,\dots,n_i,$ where for different $i$ or $j$ permutations $\pi_{j,i}$ are independent  and $\pi_{j,i}(l) $ denotes its $l$-th component of the vector. This leads to

\[\vZ^{\vpi_j}_{(\ell_{1},\ell_2)} :=\vZ_{\left((\pi_{j,1}({\ell_{1}}),\pi_{j,2}({\ell_1}),\dots,\pi_{j,a}({\ell_1}))^\top,(\pi_{j,1}({\ell_{2}}),\pi_{j,2}({\ell_2}),\dots,\pi_{j,a}({\ell_1}))^\top\right)}.\]

Based on this, depending on the number of permutations $\Upsilon_1\in \N$ estimators for $\tr(\vT\vV_N)$, $\tr\left((\vT\vV_N)^2\right)$ and $\tr\left((\vT\vV_N)^3\right)$  are given through

 \[B_{1}\left(\Upsilon\right)=\frac 1 {2\Upsilon}\sum\limits_{j=1}^{\Upsilon}\sum\limits_{\ell_1<\ell_2=1}^{n_{\min}}
\frac{{\vZ^{\vpi_j}_{(\ell_1,\ell_2)}}^\top \vT{\vZ^{\vpi_j}_{(\ell_1,\ell_2)}} }{\binom{n_{\min}}{2}}, \]

 \[B_{2}\left(\Upsilon\right)=\frac 1 {4\Upsilon_1}\sum\limits_{j=1}^{\Upsilon}\sum\limits_{\begin{footnotesize}\substack{\ell_{1}\neq \dots \neq  \ell_{4}=1\\\ell_{1}<\ell_{2},\dots,\ell_{3}< \ell_{4}}\end{footnotesize}}^{n_{\min}}
\frac{\left[{\vZ^{\vpi_j}_{(\ell_1,\ell_2)}}^\top \vT{\vZ^{\vpi_j}_{(\ell_3,\ell_4)}}\right]^2 }{6 \cdot \binom{n_{\min}}{4}}\]
and 
 \[B_{3}\left(\Upsilon\right)=\frac 1 {8\Upsilon}\sum\limits_{j=1}^{\Upsilon}\sum\limits_{\ell_1\neq...\neq \ell_6=1}^{n_{\min}}
\frac{{\vZ^{\vpi_j}_{(\ell_1,\ell_2)}}^\top \vT{\vZ^{\vpi_j}_{(\ell_3,\ell_4)}}\Cdot {\vZ^{\vpi_j}_{(\ell_3,\ell_4)}}^\top \vT{\vZ^{\vpi_j}_{(\ell_5,\ell_6)}}\cdot {\vZ^{\vpi_j}_{(\ell_5,\ell_6)}}^\top \vT{\vZ^{\vpi_j}_{(\ell_1,\ell_2)}} }{\frac{n_{\min}!}{(n_{\min}-6)!}}.\]

The choice of $\Upsilon$ should include how unbalanced the setting is. For a balanced setting, no permutation is necessary and therefore, we can choose $\Upsilon=1$. But if one group is clearly smaller than the others, a higher value leads to more consistent usage of the observations.

\begin{Le}\label{estimators4}
For the above defined estimators $B_1(\Upsilon)$, $B_2(\Upsilon)$ and $B_3(\Upsilon)$, independent of the chosen value $\Upsilon$, it holds under the frameworks  \eqref{asframe1}-\eqref{asframe3}
\begin{itemize}
\item[a)]
$B_1(\Upsilon)$ is an unbiased, dimensionally stable and ratio-consistent estimator for the unknown $\tr(\vT\vV_N)$.
\item[b)]
$B_2(\Upsilon)$ is an unbiased, dimensionally stable and ratio-consistent estimator for the unknown $\tr\left((\vT\vV_N)^2\right)$.
\item[c)]
$B_3(\Upsilon)$ is an unbiased estimator for the unknown $\tr\left((\vT\vV_N)^3\right)$, which additionally fulfils $\Var(B_3(\Upsilon))\leq 
\Lan(n_{\min}^{-1})\cdot \Lan\left(\tr^2\left(\vT\vV_N\right)^3\right)$.
\end{itemize}
\end{Le}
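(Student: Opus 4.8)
The plan is to condition throughout on the $\sigma$-field $\F$ generated by all permutations $\pi_{j,i}$ ($j=1,\dots,\Upsilon$, $i=1,\dots,a$) and to exploit that $n_{\min}:=\min(n_1,\dots,n_a)\to\infty$ holds under each of \eqref{asframe1}--\eqref{asframe3}. First I would settle unbiasedness. For any realisation of the permutations and any position indices with pairwise distinct entries, the group-wise indices entering $\vZ^{\vpi_j}_{(\ell_1,\ell_2)}$ are pairwise distinct in $\N_{n_i}$; hence, conditionally on $\F$, the vectors $\vZ^{\vpi_j}_{(\ell_1,\ell_2)}$ obey exactly the same Gaussian laws ($\vT\vZ^{\vpi_j}_{(\ell_1,\ell_2)}\sim\mathcal N_D(\vnull_D,2\vT\vV_N\vT)$, with the corresponding joint laws for several of them) as the vectors $\vZ_{(\vell_1,\vell_2)}$ entering $A_1,A_2,A_3$. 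Plugging these into the Gaussian moment formulas from \cite{sattler2018} already used in the proofs of Lemma~\ref{estimators} and Lemma~\ref{estimatorsub}, the conditional expectation of each kernel is the deterministic trace $\tr(2\vT\vV_N\vT)$, $\tr((2\vT\vV_N\vT)^2)$ resp.\ $\tr((2\vT\vV_N)^3)$. Summing over the $\binom{n_{\min}}{2}$, $6\binom{n_{\min}}{4}$ resp.\ $n_{\min}!/(n_{\min}-6)!$ admissible index tuples and over the $\Upsilon$ permutations, and dividing by the stated normalisations, all combinatorial factors cancel, so the conditional --- hence the unconditional --- expectations equal $\tr(\vT\vV_N)$, $\tr((\vT\vV_N)^2)$, $\tr((\vT\vV_N)^3)$, for every value of $\Upsilon$.

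For the variances I would use $\Var(B_k(\Upsilon))=\Var(\E[B_k(\Upsilon)\mid\F])+\E[\Var(B_k(\Upsilon)\mid\F)]$, whose first summand vanishes because $\E[B_k(\Upsilon)\mid\F]$ is the constant trace just computed. The conditional variance splits into contributions of kernel pairs from the \emph{same} permutation index $j$ and from \emph{different} indices $j\neq j'$. The same-$j$ part is, after the bijective relabelling $k\mapsto\pi_{j,i}(k)$ inside each group, exactly the variance of a common-index estimator built from $n_{\min}$ vectors per group; it is therefore bounded --- deterministically and uniformly in the permutation --- by the expressions of Lemma~\ref{estimators}/Lemma~\ref{estimatorsub} with every $n_i$ replaced by $n_{\min}$, i.e.\ by $\Lan(n_{\min}^{-1})\cdot\Lan(\tr^2(\vT\vV_N))$ for $B_1$, $\Lan(n_{\min}^{-1})\cdot\Lan(\tr^2((\vT\vV_N)^2))$ for $B_2$ and $\Lan(n_{\min}^{-1})\cdot\Lan(\tr^3((\vT\vV_N)^2))$ for $B_3$.

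For the cross-$j$ part one uses that, conditionally on $\F$, the permutations $\pi_{j,\cdot}$ and $\pi_{j',\cdot}$ are independent, so by the Gaussian identity $\Cov(\vZ^\top\vT\vZ,\vW^\top\vT\vW\mid\F)=2\tr(\vT\vSigma_{ZW}\vT\vSigma_{WZ})$ (and its higher-order analogues) the covariance of two such kernels is non-zero only if the underlying index sets collide in some group. A collision in group $i$ makes the $i$-th block of the cross-covariance equal to $c_i\,\frac{N}{n_i}\vSigma_i$ with $|c_i|\le 2$, so the covariance is $\sum_{i,r}c_ic_r\,\frac{N^2}{n_in_r}\tr(\vT_{ri}\vSigma_i\vT_{ir}\vSigma_r)$ with non-negative summands, each of which is a summand of $\tr((\vT\vV_N)^2)$; since a collision in group $i$ has probability at most $4/n_i$, one has $\E[c_i^2]\le \Lan(n_i^{-1})$, and pairing each collision with its matching trace summand and summing over groups (rather than using a crude union bound, which would spuriously introduce a factor $a$) gives $\E\big[\Cov(\text{two cross-}j\text{ kernels}\mid\F)\big]\le\Lan(n_{\min}^{-1})\cdot\tr((\vT\vV_N)^2)$, uniformly in the position indices. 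After dividing by the squared normalisation the cross-$j$ part is thus again $\Lan(n_{\min}^{-1})\cdot\Lan(\tr^2(\vT\vV_N))$. Adding both parts (and using $\tr((\vT\vV_N)^2)\le\tr^2(\vT\vV_N)$ for $B_1$, and the analogous inequalities for $B_2$) gives $\Var(B_k(\Upsilon))\le\Lan(n_{\min}^{-1})\cdot\Lan(\tr^2(\vT\vV_N))$ for $k=1$ and the corresponding bounds for $k=2,3$, uniformly in $\Upsilon$; since $n_{\min}\to\infty$ under \eqref{asframe1}--\eqref{asframe3}, this yields dimensional stability and ratio-consistency in a) and b), and is precisely the bound asserted in c).

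The step I expect to be the main obstacle is this cross-$j$ covariance bound: the two quadratic (resp.\ bilinear, trilinear) forms being compared are dependent through a \emph{random} covariance structure induced by the permutations, so one must (i) represent that covariance via the Gaussian trace formula with a block-diagonal cross-covariance matrix whose non-zero blocks are themselves random, (ii) control both the probability and the size of those blocks, and, most delicately, (iii) keep the resulting trace estimates additive-dimensionally stable --- matching each collision event to its own summand of $\tr((\vT\vV_N)^2)$ so that no factor of $a$ survives. This last point is exactly why the $B$-estimators dispense with the products-converging-to-one conditions on the relation between sample sizes and number of groups that constrain $A_1$, $A_2$ and $A_3$.
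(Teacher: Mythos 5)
Your unbiasedness argument is the same as the paper's (the law of $\vZ^{\vpi_j}_{(\ell_1,\ell_2)}$ does not depend on the concrete indices, so each kernel has the deterministic expectation $\tr(2\vT\vV_N\vT)$ etc., and the normalisations cancel), but your variance argument takes a genuinely different and considerably harder route than the one the paper uses. You split the conditional variance into same-$j$ and cross-$j$ pairs and then attack the cross-$j$ covariances via a collision analysis of the independent permutations. The paper sidesteps this entirely: writing $B_1(\Upsilon)=\frac{1}{\Upsilon}\sum_{j=1}^{\Upsilon}Y_j$ with $Y_j$ the single-permutation U-statistic, it bounds $\Cov(Y_{j_1},Y_{j_2})\le\sqrt{\Var(Y_{j_1})\Var(Y_{j_2})}=\Var(Y_1)$ by Cauchy--Schwarz and identical distribution, so that $\Var(B_1(\Upsilon))\le\Var(Y_1)$ for every $\Upsilon$. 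Since $Y_1$ uses \emph{common} indices across all groups, the fraction of overlapping index tuples is $\bigl(\binom{n_{\min}}{2}-\binom{n_{\min}-2}{2}\bigr)/\binom{n_{\min}}{2}=\Lan(n_{\min}^{-1})$ with no product over groups, which immediately gives the claimed bounds (and explains why the lemma is stated ``independent of the chosen value $\Upsilon$'': the bound is already achieved at $\Upsilon=1$ and the averaging over permutations is not credited with any variance reduction). What your route would buy, if completed, is a sharper bound that actually decreases in $\Upsilon$; but the step you yourself flag as the main obstacle --- controlling $\E[c_ic_r]$ for the random cross-covariance blocks, and its trilinear analogue for $B_3$ --- is only sketched, and it is precisely the step the paper's Cauchy--Schwarz shortcut renders unnecessary. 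As written, your proof is correct in all its conclusions but leaves its hardest ingredient unproved, whereas the paper's argument closes without it.
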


\begin{proof}

Since the distribution of the vector $\vZ$ not depends on  the concrete indices, the  unbiasedness of the estimators follows by
\[\begin{array}{ll}\E(B_{1}\left(\Upsilon\right))&=\frac 1 {2\Upsilon_1}\sum\limits_{j=1}^{\Upsilon}\sum\limits_{\ell_1<\ell_2=1}^{n_{\min}}
\frac{\E\left({\vZ^{\vpi_j}_{(\ell_1,\ell_2)}}^\top \vT{\vZ^{\vpi_j}_{(\ell_1,\ell_2)}}\right) }{\binom{n_{\min}}{2}}
\\&=\frac 1 {2\Upsilon}\sum\limits_{j=1}^{\Upsilon}\sum\limits_{\ell_1<\ell_2=1}^{n_{\min}}
\frac{\E\left({\vZ_{(\veins,\boldsymbol{2})}}^\top \vT\vZ_{(\veins,\boldsymbol{2})}\right) }{\binom{n_{\min}}{2}}\\
&=\tr(\vT\vV_N),
\end{array}\]

 \[\begin{array}{ll}\E(B_{2}\left(\Upsilon\right))&=\binom{n_{\min}}{4}^{-1}\frac 1 {6\cdot 4\Upsilon}\sum\limits_{j=1}^{\Upsilon}\sum\limits_{\begin{footnotesize}\substack{\ell_{1}\neq \dots \neq  \ell_{4}=1\\\ell_{1}<\ell_{2},\dots,\ell_{3}< \ell_{4}}\end{footnotesize}}^{n_{\min}}
\E\left(\left[{\vZ^{\vpi_j}_{(\ell_1,\ell_2)}}^\top \vT{\vZ^{\vpi_j}_{(\ell_3,\ell_4)}}\right]^2\right) 
\\
&=\binom{n_{\min}}{4}^{-1}\frac 1 {6\cdot 4\Upsilon}\sum\limits_{j=1}^{\Upsilon}\sum\limits_{\begin{footnotesize}\substack{\ell_{1}\neq \dots \neq  \ell_{4}=1\\\ell_{1}<\ell_{2},\dots,\ell_{3}< \ell_{4}}\end{footnotesize}}^{n_{\min}}
\E\left(\left[{\vZ_{(\veins,\boldsymbol{2})}}^\top \vT{\vZ_{(\boldsymbol{3},\boldsymbol{4})}}\right]^2\right)
\\
&=\tr\left((\vT\vV_N)^2\right),
\end{array}\]
and 
 \[\begin{array}{ll}\E(B_{3}\left(\Upsilon\right))&=\frac 1 {8\Upsilon}\sum\limits_{j=1}^{\Upsilon}\sum\limits_{\ell_1\neq...\neq \ell_6=1}^{n_{\min}}
\frac{\E\left({\vZ^{\vpi_j}_{(\ell_1,\ell_2)}}^\top \vT{\vZ^{\vpi_j}_{(\ell_3,\ell_4)}}\Cdot {\vZ^{\vpi_j}_{(\ell_3,\ell_4)}}^\top \vT{\vZ^{\vpi_j}_{(\ell_5,\ell_6)}}\cdot {\vZ^{\vpi_j}_{(\ell_5,\ell_6)}}^\top \vT{\vZ^{\vpi_j}_{(\ell_1,\ell_2)}}\right) }{\frac{n_{\min}!}{(n_{\min}-6)!}}
\\&=\frac 1 {8\Upsilon}\sum\limits_{j=1}^{\Upsilon}\sum\limits_{\ell_1\neq...\neq \ell_6=1}^{n_{\min}}
\frac{\E\left({\vZ_{(\boldsymbol{1},\boldsymbol{2})}}^\top \vT{\vZ_{(\boldsymbol{3},\boldsymbol{4})}}\Cdot {\vZ_{(\boldsymbol{3},\boldsymbol{4})}}^\top \vT{\vZ_{(\boldsymbol{5},\boldsymbol{6})}}\cdot {\vZ_{(\boldsymbol{5},\boldsymbol{6})}}^\top \vT{\vZ_{(\boldsymbol{1},
\boldsymbol{2})}}\right) }{\frac{n_{\min}!}{(n_{\min}-6)!}}\\
&=\tr\left((\vT\vV_N)^3\right).\end{array}\]

For dimensional stability, we now calculate the variance of these estimators, using similar approaches and inequalities as for the previous estimators. Based on the fact that all groups use the same indices, the number of remaining index combinations simplifies, and we calculate

\[\begin{array}{ll}\Var(B_{1}\left(\Upsilon\right))&\leq\frac 1 {4\Upsilon^2}\cdot \binom{n_{\min}}{2}^{-2}\sum\limits_{j_1,j_2=1}^{\Upsilon}\Var\left(\sum\limits_{\ell_1\neq\ell_2=1}^{n_{\min}}{\vZ^{\vpi_1}_{(\ell_1,\ell_2)}}^\top \vT{\vZ^{\vpi_1}_{(\ell_1,\ell_2)}} \right)

\\[2.1ex]&=\frac 1 4 \cdot \left(\frac{ \binom{n_{\min}}{2} - \binom{n_{\min}-2}{2} }{ \binom{n_{\min}}{2}}\right)\Var\left({\vZ^{\vpi_1}_{(1,2)}}^\top \vT{\vZ^{\vpi_1}_{(1,2)}} \right)
\\[2.1ex]&=\frac 1 4 \cdot  \left(\frac{ \binom{n_{\min}}{2} - \binom{n_{\min}-2}{2} }{ \binom{n_{\min}}{2}}\right)\Var\left({\vZ_{(\boldsymbol{1},\boldsymbol{2})}}^\top \vT{\vZ_{(\boldsymbol{1},\boldsymbol{2})}} \right)

\\[2.1ex]
&=\Lan(n_{\min}^{-1})\cdot \Lan\left(\tr^2\left(\vT\vV_N\right)\right).
\end{array}\]\\

Similar calculations for the other estimators lead to
\[\begin{array}{ll}\Var(B_{2}\left(\Upsilon\right))&\leq\frac 1 {16\Upsilon^2}\Cdot \left(6 \binom{n_{\min}}{4}\right)^{-2}\sum\limits_{j_1,j_2=1}^{\Upsilon}\Var\left(\sum\limits_{\ell_1\neq...\neq \ell_4=1}^{n_{\min}}
\left[{\vZ^{\vpi_1}_{(\ell_1,\ell_2)}}^\top \vT{\vZ^{\vpi_1}_{(\ell_3,\ell_4)}}\right]^2\right) 
\\[1.5ex]&=\frac 1 {16}\Cdot  \left(\frac{\binom{n_{\min}}{4} -\binom{n_{\min}-4}{4}}{\binom{n_{\min}}{4} }\right)\Var\left(
\left[{\vZ^{\vpi_1}_{(1,2)}}^\top \vT{\vZ^{\vpi_1}_{(3,4)}}\right]^2\right)

\\[2.5ex]
&=\Lan(n_{\min}^{-1})\cdot \Lan\left(\tr^2\left(\left(\vT\vV_N\right)^2\right)\right),
\end{array}\]

and 

 \[\begin{array}{ll}\Var(B_{3}\left(\Upsilon\right))&\leq\frac 1 {64\Upsilon^2}\sum\limits_{j_1,j_2=1}^{\Upsilon}\Var\left(\sum\limits_{\ell_1\neq...\neq \ell_6=1}^{n_{\min}}
\frac{{\vZ^{\vpi_1}_{(\ell_1,\ell_2)}}^\top \vT{\vZ^{\vpi_1}_{(\ell_3,\ell_4)}}\Cdot {\vZ^{\vpi_1}_{(\ell_3,\ell_4)}}^\top \vT{\vZ^{\vpi_1}_{(\ell_5,\ell_6)}}\cdot {\vZ^{\vpi_1}_{(\ell_5,\ell_6)}}^\top \vT{\vZ^{\vpi_1}_{(\ell_1,\ell_2)}} }{\frac{n_{\min}!}{(n_{\min}-6)!}}\right)
\\[3.2ex]&=
\frac 1 {64}\Cdot  \left(\frac{\frac {  n_{\min}! }{\left(n_{\min}-6\right)!} -\frac {  \left(n_{\min}-6\right)! }{\left(n_{\min}-12\right)!} }{\frac {  n_{\min}! }{\left(n_{\min}-6\right)!} }\right)
\Var\left(
\frac{{\vZ^{\vpi_1}_{(1,2)}}^\top \vT{\vZ^{1}_{(3,4)}}\Cdot {\vZ^{\vpi_1}_{(3,4)}}^\top \vT{\vZ^{\vpi_1}_{(5,6)}}\cdot {\vZ^{\vpi_1}_{(5,6)}}^\top \vT{\vZ^{\vpi_1}_{(1,2)}} }{\frac{n_{\min}!}{(n_{\min}-6)!}}\right)

\\[2ex]
&=\Lan(n_{\min}^{-1})\cdot \Lan\left(\tr^2\left(\vT\vV_N)^3\right)\right).\end{array}\]
\end{proof}

To reduce the required computation time, we also define subsampling versions of these estimators given by
 \[B_{1}^\star\left(\Upsilon_1,\Upsilon_2\right)=\frac 1 {2\Upsilon_1\Upsilon_2}\sum\limits_{j=1}^{\Upsilon_1}\sum\limits_{\upsilon=1}^{\Upsilon_2}
{\vZ^{\vpi_j}_{(\vsigma_1(\upsilon,2),\vsigma_2(\upsilon,2))}}^\top \vT{\vZ^{\vpi_j}_{(\vsigma_1(\upsilon,2),\vsigma_2(\upsilon,2))}} , \]

 \[B_{2}^\star\left(\Upsilon_1,\Upsilon_2\right)=\frac 1 {4\Upsilon_1\Upsilon_2}\sum\limits_{j=1}^{\Upsilon_1}\sum\limits_{\upsilon=1}^{\Upsilon_2}
{\left[{\vZ^{\vpi_j}_{(\vsigma_1(\upsilon,4),\vsigma_2(\upsilon,4))}}^\top \vT{\vZ^{\vpi_j}_{(\vsigma_3(\upsilon,4),\vsigma_4(\upsilon,4))}}\right]^2 }\]
and 
 \[B_{3}^\star\left(\Upsilon_1,\Upsilon_2\right)=\frac 1 {8\Upsilon_1\Upsilon_1}\sum\limits_{j=1}^{\Upsilon_1}\sum\limits_{\upsilon=1}^{\Upsilon_2}{\Lambda^{\vpi_j}(\upsilon;1,2,3,4)\Cdot \Lambda^{\vpi_j}(\upsilon;3,4,5,6)\cdot \Lambda^{\vpi_j}(\upsilon;5,6,1,2) }\]
with
\[\Lambda^{\vpi_j}(\upsilon;\ell_1,\ell_2,\ell_3,\ell_4):={\vZ^{\vpi_j}_{(\vsigma_{\ell_1}(\upsilon,6),\vsigma_{\ell_2}(\upsilon,6))}}^\top \vT{\vZ^{\vpi_j}_{(\vsigma_{\ell_3}(\upsilon,6),\vsigma_{\ell_4}(\upsilon,6))}}.\]

\begin{Le}\label{estimators5}
In each of the asymptotic frameworks \eqref{asframe1}-\eqref{asframe3} the subsampling estimators $B_{1}^\star\left(\Upsilon_1,\Upsilon_2\right)$, $B_{2}^\star\left(\Upsilon_1,\Upsilon_2\right)$ and $B_{3}^\star\left(\Upsilon_1,\Upsilon_2\right)$ have the same properties like $B_1(\Upsilon),B_2(\Upsilon)$ and $B_3(\Upsilon)$ .
\end{Le}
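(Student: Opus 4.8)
The plan is to regard $B_i^\star(\Upsilon_1,\Upsilon_2)$ as the composition of the two randomisation devices whose effect has already been quantified: the subsampling mechanism treated in the proof of \Cref{estimatorsub}, and the permutation--mixing mechanism treated in the proof of \Cref{estimators4}. Unbiasedness is immediate: since the distribution of $\vZ^{\vpi_j}_{(\ell_1,\ell_2)}$ depends neither on the concrete indices nor on the permutation, every summand occurring in $B_i^\star(\Upsilon_1,\Upsilon_2)$ has, after normalisation, the same expectation as the corresponding kernel of $B_i(\Upsilon)$; by linearity $\E\bigl(B_1^\star(\Upsilon_1,\Upsilon_2)\bigr)=\tr(\vT\vV_N)$, $\E\bigl(B_2^\star(\Upsilon_1,\Upsilon_2)\bigr)=\tr\bigl((\vT\vV_N)^2\bigr)$ and $\E\bigl(B_3^\star(\Upsilon_1,\Upsilon_2)\bigr)=\tr\bigl((\vT\vV_N)^3\bigr)$, independently of $\Upsilon_1$ and $\Upsilon_2$.

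For dimensional stability and ratio-consistency I would use the law of total variance, conditioning on the $\sigma$-field $\F$ generated by the permutations $\vpi_1,\dots,\vpi_{\Upsilon_1}$ together with all subsampling index vectors $\vsigma(\upsilon,m)$, $m\in\{2,4,6\}$. Exactly as in the proofs of \Cref{estimatorsub} and \Cref{estimators4}, $\E\bigl(B_i^\star(\Upsilon_1,\Upsilon_2)\mid\F\bigr)$ equals the constant target trace, so the between-part of the variance vanishes and $\Var\bigl(B_i^\star(\Upsilon_1,\Upsilon_2)\bigr)=\E\bigl(\Var(B_i^\star(\Upsilon_1,\Upsilon_2)\mid\F)\bigr)$. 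The conditional variance is an average of covariances of the kernel terms; such a covariance is zero whenever the two underlying tuples of permuted, subsampled observation indices are disjoint, and is otherwise dominated by the variance of a single kernel, which is of the order $\Lan\bigl(\tr^2((\vT\vV_N)^i)\bigr)$ for $i\in\{1,2,3\}$ by \Cref{estimators4} and \cite{sattler2018}. It then remains to bound the proportion of ``colliding'' pairs of kernel terms: for two terms with the same outer index $j$ this proportion is controlled exactly as in the proof of \Cref{estimatorsub}, by $1-\bigl(1-\Upsilon_2^{-1}\bigr)\prod_{i=1}^a\binom{n_{\min}-m}{m}\big/\binom{n_{\min}}{m}$, whereas for two terms with distinct outer indices the independence of the two permutations makes the resulting index tuples behave like independent uniform draws, so that they overlap only with probability $1-\prod_{i=1}^a\binom{n_{\min}-m}{m}\big/\binom{n_{\min}}{m}$, which is the quantity already shown in \Cref{estimators4} to be of order $\Lan(n_{\min}^{-1})$. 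Adding these two contributions gives $\Var\bigl(B_i^\star(\Upsilon_1,\Upsilon_2)\bigr)=\Lan\bigl(\tr^2((\vT\vV_N)^i)\bigr)$ unconditionally, i.e.\ dimensional stability, and since both contributions tend to $0$ as $\Upsilon_2\to\infty$ and $n_{\min}\to\infty$ in each of the frameworks \eqref{asframe1}--\eqref{asframe3}, ratio-consistency of $B_1^\star$ and $B_2^\star$ follows; for $B_3^\star$ the same computation yields the variance bound $\Var\bigl(B_3^\star(\Upsilon_1,\Upsilon_2)\bigr)\leq\Lan(n_{\min}^{-1})\cdot\Lan\bigl(\tr^2((\vT\vV_N)^3)\bigr)$ asserted in \Cref{estimators4} for $B_3(\Upsilon)$.

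The main obstacle is the collision count for kernel terms carrying different outer indices $j\neq j'$: one has to make precise that two index tuples obtained by independently permuting $\{1,\dots,n_{\min}\}$ and then drawing a length-$m$ subsample overlap only with probability $\Lan(n_{\min}^{-1})$, uniformly over $\upsilon,\upsilon'$, and to check that the product $\prod_{i=1}^a\binom{n_{\min}-m}{m}/\binom{n_{\min}}{m}$ still converges to $1$ in those regimes of \eqref{asframe1}--\eqref{asframe3} where the number of groups $a$ is itself allowed to diverge --- this is precisely the combinatorial estimate already established for $B_i(\Upsilon)$ in \Cref{estimators4}, now invoked conditionally on $\F$. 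Once it is available, the remaining manipulations are the routine bookkeeping already performed for $A_i^\star(\Upsilon)$ and $B_i(\Upsilon)$, and the statement of \Cref{estimators5} follows.
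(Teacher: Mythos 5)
Your unbiasedness argument and the overall ``law of total variance'' skeleton match the paper, but the collision count at the heart of your variance bound contains a genuine error that defeats the purpose of the $B$-estimators. You bound the fraction of colliding same-$j$ kernel pairs by
\[
1-\Bigl(1-\tfrac{1}{\Upsilon_2}\Bigr)\prod_{i=1}^a\binom{n_{\min}-m}{m}\Big/\binom{n_{\min}}{m},
\]
importing the product over the $a$ groups from the proof of \Cref{estimatorsub}. That product belongs to the $A^\star$-estimators, whose index tuples are drawn \emph{independently in each group}. The whole point of the permuted estimators is that, within one permutation block $j$, the \emph{same} subsample indices from $\N_{n_{\min}}$ are used in every group; disjointness of two index tuples in $\{1,\dots,n_{\min}\}$ then forces independence of the corresponding kernels simultaneously in all $a$ blocks, so the correct fraction is the single ratio $1-(1-\Upsilon_2^{-1})\binom{n_{\min}-m}{m}/\binom{n_{\min}}{m}$, with no product. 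Your version is not merely a weaker constant: since $\bigl(\binom{n_{\min}-m}{m}/\binom{n_{\min}}{m}\bigr)^a\to 1$ requires $a/n_{\min}\to 0$, it fails to vanish in the frameworks \eqref{asframe2} and \eqref{asframe3} where $a\to\infty$, i.e.\ it silently reintroduces exactly the relation between the number of groups and the sample sizes that \Cref{estimators3} is supposed to remove. The same objection applies to your cross-$j$ claim that two independently permuted tuples overlap ``only with probability $1-\prod_{i=1}^a\binom{n_{\min}-m}{m}/\binom{n_{\min}}{m}=\Lan(n_{\min}^{-1})$'': the quantity established in \Cref{estimators4} is the product-free ratio, and the product form is not $\Lan(n_{\min}^{-1})$ for diverging $a$. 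Moreover, for $j\neq j'$ a collision in a \emph{single} group already destroys independence of the two $\vZ$-vectors, so the no-collision probability genuinely is a product over groups there, and this route cannot deliver ratio-consistency without an extra condition on $a$ versus $n_{\min}$.

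The paper avoids both problems by never comparing kernels across different permutation blocks. It fixes $j$, applies the total-variance decomposition to the inner subsample average alone (obtaining the product-free bound $(1-(1-\Upsilon_2^{-1})\binom{n_{\min}-2}{2}/\binom{n_{\min}}{2})\cdot\Lan(\tr^2((\vT\vV_N)^m))$), and then combines the $\Upsilon_1$ blocks by the Minkowski inequality $\Var(\sum_j Y_j)\le(\sum_j\sqrt{\Var(Y_j)})^2$. The price is that $\Upsilon_1$ contributes nothing to the variance bound --- which the paper states explicitly --- but the bound then tends to zero as $\Upsilon_2\to\infty$ and $n_{\min}\to\infty$ with no condition on $a$ whatsoever. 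To repair your proof you would either have to drop the cross-$j$ covariance analysis in favour of this blockwise combination, or prove a genuinely new estimate for the cross-permutation overlap that does not reduce to the product $\prod_{i=1}^a$; as written, the argument does not establish the lemma in the regimes with $a\to\infty$.
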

\begin{proof}
Again the concrete indices have no influence on the expectation values, and therefore the unbiasedness follows by
 \[\begin{array}{ll}\E\left(B_{1}^\star\left(\Upsilon_1,\Upsilon_2\right)\right)&=\frac 1 {2\Upsilon_1\Upsilon_2}\sum\limits_{j=1}^{\Upsilon_1}\sum\limits_{\upsilon=1}^{\Upsilon_2}
{\E\left({\vZ^{\vpi_j}_{(\vsigma_1(\upsilon,2),\vsigma_2(\upsilon,2))}}^\top \vT{\vZ^{\vpi_j}_{(\vsigma_1(\upsilon,2),\vsigma_2(\upsilon,2))}} \right)}
\\&=\frac 1 {2\Upsilon_1\Upsilon_2}\sum\limits_{j=1}^{\Upsilon_1}\sum\limits_{\upsilon=1}^{\Upsilon_2}
{\E\left({\vZ_{(\veins,\boldsymbol{2})}}^\top \vT\vZ_{(\veins,\boldsymbol{2})}\right)}
\\[1.3ex]&=\tr(\vT\vV_N),
\end{array} \]

 \[\begin{array}{ll}\E\left(B_{2}^\star\left(\Upsilon_1,\Upsilon_2\right)\right)&=\frac 1 {4\Upsilon_1\Upsilon_2}\sum\limits_{j=1}^{\Upsilon_1}\sum\limits_{\upsilon=1}^{\Upsilon_2}
\E\left({\left[{\vZ^{\vpi_j}_{(\vsigma_1(\upsilon,4),\vsigma_2(\upsilon,4))}}^\top \vT{\vZ^{\vpi_j}_{(\vsigma_3(\upsilon,4),\vsigma_4(\upsilon,4))}}\right]^2 }\right)
\\&=\frac 1 {4\Upsilon_1\Upsilon_2}\sum\limits_{j=1}^{\Upsilon_1}\sum\limits_{\upsilon=1}^{\Upsilon_2}
\E\left(\left[{\vZ_{(\veins,\boldsymbol{2})}}^\top \vT{\vZ_{(\boldsymbol{3},\boldsymbol{4})}}\right]^2\right)
\\&=\tr\left((\vT\vV_N)^2\right)
\end{array} \]
and 
 \[\begin{array}{ll}\E\left(B_{3}^\star\left(\Upsilon_1,\Upsilon_2\right)\right)&=\frac 1 {8\Upsilon_1\Upsilon_2}\sum\limits_{j=1}^{\Upsilon_1}\sum\limits_{\upsilon=1}^{\Upsilon_2}{\E\left(\Lambda^{\vpi_j}(\upsilon;1,2,3,4)\Cdot \Lambda^{\vpi_j}(\upsilon;3,4,5,6)\cdot \Lambda^{\vpi_j}(\upsilon;5,6,1,2) \right)}\\
 &=\frac 1 {8\Upsilon_1\Upsilon_2}\sum\limits_{j=1}^{\Upsilon_1}\sum\limits_{\upsilon=1}^{\Upsilon_2}{\E\left({\vZ_{(\boldsymbol{1},\boldsymbol{2})}}^\top \vT{\vZ_{(\boldsymbol{3},\boldsymbol{4})}}\Cdot {\vZ_{(\boldsymbol{3},\boldsymbol{4})}}^\top \vT{\vZ_{(\boldsymbol{5},\boldsymbol{6})}}\cdot {\vZ_{(\boldsymbol{5},\boldsymbol{6})}}^\top \vT{\vZ_{(\boldsymbol{1},
\boldsymbol{2})}}\right) }
\\&=\tr\left((\vT\vV_N)^3\right) 
  .\end{array} \]

For the variance, we consider\\

 $\begin{array}{ll}
&\Var\left(\E\left(\frac 1 {2\Upsilon_2}\sum\limits_{\upsilon=1}^{\Upsilon_2}
{\vZ^{\vpi_j}_{(\vsigma_1(\upsilon,2),\vsigma_2(\upsilon,2))}}^\top \vT{\vZ^{\vpi_j}_{(\vsigma_1(\upsilon,2),\vsigma_2(\upsilon,2))}} \Big\lvert\F(\vsigma(\Upsilon_2,2))\right)\right)\\[1ex]
=&\Var\left(\tr\left(\vT\vV_N\right)\right)=0.

\end{array}$\\\\ and therefore with the inequalities from \cite{sattler2018} \\\\
$\begin{array}{ll}
&\Var\left(\frac 1 {2\Upsilon_2}\sum\limits_{\upsilon=1}^{\Upsilon_2}
{\vZ^{\vpi_j}_{(\vsigma_1(\upsilon,2),\vsigma_2(\upsilon,2))}}^\top \vT{\vZ^{\vpi_j}_{(\vsigma_1(\upsilon,2),\vsigma_2(\upsilon,2))}} \right)
\\[2ex]
=&0+\frac 1 {4\Upsilon_2^2}\E\left(\Var\left(\sum\limits_{\upsilon=1}^{\Upsilon_2}
{\vZ^{\vpi_j}_{(\vsigma_1(\upsilon,2),\vsigma_2(\upsilon,2))}}^\top \vT{\vZ^{\vpi_j}_{(\vsigma_1(\upsilon,2),\vsigma_2(\upsilon,2))}}\Big\lvert\F(\vsigma(\Upsilon_2,2))\right)\right)\end{array}$\\

$\begin{array}{ll}
{\leq}&\frac 1 {4\Upsilon_2^2}\E\left(\sum\limits_{(j,\ell)\in \N_{\Upsilon_2} \times \N_{\Upsilon_2}  \setminus M(\Upsilon_2,\vsigma(\upsilon,2))} \Var\left( {\vZ_{(\vsigma_1(j,2),\vsigma_2(j,2))}}^\top \vT{\vZ_{(\vsigma_1(j,2),\vsigma_2(j,2))}}|\F(\vsigma(\Upsilon_2,2))\right)\right)
\\[3.2ex]
=&\frac{\E\left(|\N_{\Upsilon_2}\times \N_{\Upsilon_2} \setminus M(\Upsilon_2,\vsigma(\upsilon,2))|\right)}{\Upsilon_2^2}\cdot \frac{\Var\left( {\vZ_{(\vsigma_1(j,2),\vsigma_2(j,2))}}^\top \vT{\vZ_{(\vsigma_1(j,2),\vsigma_2(j,2))}}\right)}{4\Upsilon_2^2}
\\[1.0ex]
{\leq}&
\left(1-\left(1-\frac{1}{\Upsilon_2}\right)\cdot \frac{ \binom{n_{\min}-2} {2}}{\binom{n_{\min}} {2}}\right)\cdot \tr^2\left(\vT\vV_N\right).
\end{array}$\\\\

and therefore for the whole estimator\\\\
$\begin{array}{ll}
\Var\left(B_{1}^\star\left(\Upsilon_1,\Upsilon_2\right)\right)&

\leq \frac{1}{\Upsilon_1^2}\left(\sum\limits_{j=1}^{\Upsilon_1}\sqrt{\Var\left(\frac 1 {2\Upsilon_2}\sum\limits_{\upsilon=1}^{\Upsilon_2}
{\vZ^{\vpi_j}_{(\vsigma_1(\upsilon,2),\vsigma_2(\upsilon,2))}}^\top \vT{\vZ^{\vpi_j}_{(\vsigma_1(\upsilon,2),\vsigma_2(\upsilon,2))}}\right)}\right)^2
\\[2.5ex]
&\leq \frac{1}{\Upsilon_1^2}\left(\sum\limits_{j=1}^{\Upsilon_1}\sqrt{\left(1-\left(1-\frac{1}{\Upsilon_2}\right)\cdot \frac{ \binom{n_{\min}-2} {2}}{\binom{n_{\min}} {2}}\right)\cdot \tr^2\left(\vT\vV_N\right)}\right)^2
\\[2.5ex]

&=\left(1-\left(1-\frac{1}{\Upsilon_2}\right)\cdot \frac{ \binom{n_{\min}-2} {2}}{\binom{n_{\min}} {2}}\right)\cdot \tr^2\left(\vT\vV_N\right).
\end{array}$\\\\

The same steps for the other estimators, together with results from the proof of \Cref{estimators4} lead to

$\begin{array}{ll}
\Var\left(B_{2}^\star\left(\Upsilon_1,\Upsilon_2\right)\right)&
=\left(1-\left(1-\frac{1}{\Upsilon_2}\right)\cdot \frac{ \binom{n_{\min}-4} {4}}{\binom{n_{\min}} {4}}\right)\cdot  \Lan\left(\tr^2\left(\left(\vT\vV_N\right)^2\right)\right).
\end{array}$\\\\
and 

$\begin{array}{ll}
\Var\left(B_{3}^\star\left(\Upsilon_1,\Upsilon_2\right)\right)&
=\left(1-\left(1-\frac{1}{\Upsilon_2}\right)\cdot \frac{ \binom{n_{\min}-6} {6}}{\binom{n_{\min}} {6}}\right)\cdot  \Lan\left(\tr^3\left(\left( \vT\vV_N\right)^2\right)\right).
\end{array}$

\end{proof}
This Lemma is focused on an increasing number of $\vUpsilon_2$, while the number of permutations $\vUpsilon_1$ has no influence.

To use the observations from bigger groups more evenly, choosing $\Upsilon_2=1$ to have a maximal number of mixtures would be reasonable. From a computational sight, this choice would lead to more effort and, therefore, higher calculation time. So the choice of $\Upsilon_1$ and $\Upsilon_2 $ is always a balance between the property of the estimators and time. With additional notation and extension of results from \cite{sattler2018}, the above Lemma can also be formulated under the condition $\Upsilon_1\Upsilon_2\to \infty$.

\begin{proof}[Proof of Theorem \ref{estimators3}]
With the results from Lemma \ref{estimators4}  the estimators $B_1,B_2$ and $B_3$ have the same properties as $A_1$, $A_2$ and $A_3$, without their additional conditions on the relation between samples sizes and the number of groups. Therefore they possess all properties, used for the proof of  \Cref{Theorem4} and \Cref{estimators2}. In this way the results follows directly  replace the estimators and identically for  $B_1^\star(\Upsilon_1,\Upsilon_2)$, $B_2^\star(\Upsilon_1,\Upsilon_2)$ and $B_3^\star(\Upsilon_1,\Upsilon_2)$ based on \ref{estimators5}.
\end{proof}

\bibliographystyle{apalike}
\bibliographystyle{unsrtnat}
\newpage

\phantomsection
\addcontentsline{toc}{chapter}{Bibliography}
\bibliography{Literatur}

\begin{thebibliography}{}

\bibitem[Brunner et~al., 2012]{brunner2012}
Brunner, E., Bathke, A.~C., and Placzek, M. (2012).
\newblock Estimation of box's e for low- and high-dimensional repeated measures
  designs with unequal covariance matrices.
\newblock {\em Biometrical journal. Biometrische Zeitschrift}, 54:301--16.

\bibitem[Chen and Qin, 2010]{chen2010}
Chen, S.~X. and Qin, Y.-L. (2010).
\newblock A two-sample test for high-dimensional data with applications to
  gene-set testing.
\newblock {\em Ann. Statist.}, 38(2):808--835.

\bibitem[Chvedoff et~al., 1980]{chvedoff1980}
Chvedoff, M., Clarke, M., Faccini, J., Irisarri, E., and Monro, A. (1980).
\newblock Effects on mice of numbers of animals per cage: an 18-month study
  (preliminary results).
\newblock In {\em Further Studies in the Assessment of Toxic Actions}, pages
  435--438. Springer.

\bibitem[Cram{\'e}r, 1936]{cramer}
Cram{\'e}r, H. (1936).
\newblock Ueber eine eigenschaft der normalen verteilungsfunktion.
\newblock {\em Mathematische Zeitschrift}, 41:405--414.

\bibitem[Friedrich et~al., 2017]{friedrich2017permuting}
Friedrich, S., Brunner, E., and Pauly, M. (2017).
\newblock Permuting longitudinal data in spite of the dependencies.
\newblock {\em Journal of Multivariate Analysis}, 153:255--265.

\bibitem[Hallal et~al., 2004]{hallal2014}
Hallal, P., Victora, C., Wells, J., Costa, R., and Valle, N. (2004).
\newblock Comparison of short and full-length international physical activity
  questionnaires.
\newblock {\em Journal of Physical Activity and Health}, 1(3):227--234.

\bibitem[Happ et~al., 2016]{happ2016}
Happ, M., Harrar, S.~W., and Bathke, A.~C. (2016).
\newblock Inference for low- and high-dimensional multigroup repeated measures
  designs with unequal covariance matrices.
\newblock {\em Biometrical Journal. Biometrische Zeitschrift}, 58(4):810--30.

\bibitem[Harrar and Kong, 2016]{harrar2016}
Harrar, S.~W. and Kong, X. (2016).
\newblock High-dimensional multivariate repeated measures analysis with unequal
  covariance matrices.
\newblock {\em Journal of Multivariate Analysis}, 145:1--21.

\bibitem[Hughes and Nowak, 1973]{hughes1973}
Hughes, P. and Nowak, M. (1973).
\newblock The effect of the number of animals per cage on the growth of the
  rat.
\newblock {\em Laboratory animals}, 7(3):293--296.

\bibitem[Konietschke et~al., 2015]{kon:2015}
Konietschke, F., Bathke, A.~C., Harrar, S., and Pauly, M. (2015).
\newblock Parametric and nonparametric bootstrap methods for general {MANOVA}.
\newblock {\em Journal of Multivariate Analysis}, 140:291--301.

\bibitem[Mathai and Provost, 1992]{mathaiProvost}
Mathai, A.~M. and Provost, S.~B. (1992).
\newblock Quadratic forms in random variables, vol. 126 of statistics:
  Textbooks and monographs.

\bibitem[Pauly et~al., 2015]{pEB}
Pauly, M., Ellenberger, D., and Brunner, E. (2015).
\newblock Analysis of high-dimensional one group repeated measures designs.
\newblock {\em Statistics}, 49(6):1243--1261.

\bibitem[Roszkowski and Bean, 1990]{roszkowski1990}
Roszkowski, M. and Bean, A. (1990).
\newblock Believe it or not! longer questionnaires have lower response rates.
\newblock {\em Journal of Business and Psychology}, 4(4):495--509.

\bibitem[Sattler, 2021]{sattler2021}
Sattler, P. (2021).
\newblock {A comprehensive treatment of quadratic-form-based inference in
  repeated measures designs under diverse asymptotics}.
\newblock {\em Electronic Journal of Statistics}, 15(1):3611 -- 3634.

\bibitem[Sattler and Pauly, 2018]{sattler2018}
Sattler, P. and Pauly, M. (2018).
\newblock Inference for high-dimensional split-plot-designs: A unified approach
  for small to large numbers of factor levels.
\newblock {\em Electron. J. Statist.}, 12(2):2743--2805.

\end{thebibliography}

\end{document}